\def\T{\mathcal{T}}
\def\tG{\widetilde{G}}
\def\tV{\widetilde{V}}
\def\tA{\widetilde{A}}
\def\tP{\widetilde{P}}
\definecolor{atomictangerine}{rgb}{1.0, 0.6, 0.4}
\newtheorem{thm}{Theorem}[section]
\newtheorem{prop}{Proposition}[section]
\newtheorem{ex}{Example}[section]
\newcommand*\circled[1]{\tikz[baseline=(char.base)]{
            \node[shape=circle,draw,inner sep=1.25pt] (char) {\small #1};}}
\newcommand*\squared[1]{\tikz[baseline=(char.base)]{
            \node[shape=rectangle,fill=gray!20,draw,inner sep=2pt] (char) {#1};}}
\let\origmaketitle\maketitle
\def\maketitle{
	\begingroup
	\def\uppercasenonmath##1{} 
	\let\MakeUppercase\relax 
	\origmaketitle
	\endgroup
}
\begin{document}

\title[]{\Large Coordinating Drop-Off Locations and Pickup Routes:\\
A Budget-Constrained Routing Perspective}
\author[M. Albareda-Sambola, V. Blanco, \MakeLowercase{and} Y. Hinojosa]{
{\large Maria Albareda-Sambola$^{\dagger}$, V\'ictor Blanco$^{\ddagger}$, and Yolanda Hinojosa$^{\star}$}\medskip\\
$^\dagger$Universitat Politècnica de Catalunya, Spain\\
$^\dagger$Institute of Mathematics (IMAG), Universidad de Granada, Spain\\
$^\star$Institute of Mathematics (IMUS), Universidad de Sevilla, Spain\\
\texttt{maria.albareda@ups.edu},
\texttt{vblanco@ugr.es}, \texttt{yhinojos@us.es}
}

\maketitle

\begin{abstract}
We introduce in this paper a new variant of a location routing problem, to decide,   the number and location of drop-off points to install based on the demands of a set of pick-up points, according to a given set-up budget for installing drop-off points.  A single vehicle is in charge for all pick-up and drop-off operations, and the solution cost is associated with its route, which must also be decided.

We provide a general and flexible mathematical optimization based approach for solving the problem that has some peculiarities to assure that the demand is adequately picked up, that some pickup points can be visited multiple times, that the capacity of the vehicle is respected, or that the vehicle is capable to implement the path or tour in the obtained solution. We report the results of a extense battery of experiments to validate our proposal on synthetic instances, and provide some insighs on the usefulness of our approach in practical applications.
\end{abstract}

\keywords{
Pickup and Drop-off, Facility Location, Vehicle Routing Problem}

\section*{Introduction}

The efficient distribution and  collection of goods in a supply chain and the optimal placement of facilities or drop-off points are crucial, particularly for last-mile operations. Last-mile delivery often involves picking up products from various facilities and ensuring timely, accurate deliveries to final destinations, which can be dispersed and unpredictable. Poorly planned distribution networks can lead to increased costs, delays, and inefficiencies, impacting customer satisfaction and business performance. By strategically positioning drop-off points and streamlining the routing of goods between pick-up locations and final delivery spots, companies can reduce transportation costs, minimize delivery times, and enhance the flexibility needed to handle varying demand. Optimal placement of these points also supports sustainability efforts by reducing unnecessary trips and fuel consumption, contributing to both economic and environmental benefits.

This type of supply chain is particularly important in the case study that  initially motivated this research: the design of the distribution network to produce biogas. Different organizations and governments have already signed agreements to ensure affordable, reliable, sustainable, and modern energy by 2030 to 2050~\citep{ONU,IEA,IPCC,EGD}, with a battery of measures designed to mitigate the effects of climate change. One of the main strategies to achieve the proposed goal is the use of biogas as a sustainable alternative to carbon-based energies, since it contributes to the reduction of greenhouse gases and the development of the circular economy through the anaerobic digestion of organic waste from different sources, and its transformation into fuel. The production of biogas from manure or other types of organic waste requires the implementation of a complex supply chain network~\cite[see, e.g.][]{blanco2024waste} to collect the waste from the farms, pre-process it in specialized pre-treatment plants (where the product is dried and cleaned), deliver the result to the anaerobic digestion plants to produce the biogas, and then distribute the final product (in the form of biogas or digestate) to different types of users (to an existing gas network, to external customers by liquefying the gas and shipping it, or back to the farms). When designing the network from scratch, both the location of the different types of plants and the distribution system have to be decided. Thus, it is necessary to develop technological tools to help the different agents involved in the process. At this point, mathematical optimization has been proven to be a useful framework to construct robust and effective supply chains with different characteristics \citep[see e.g.][and the references therein]{DEMEYER2015247, sarker_optimal_2018, TOMINAC2022107666, CAMBERO201462, GHADERI2016972}. Specifically, some successful approaches have been proposed to derive aid-decision tools for the management and design of supply chains for the production of biogas.  \cite{jensen_optimizing_2017}, propose a minimum cost flow-based model to assure economically feasible biogas plant projects for an existing supply chain network. In \cite{sarker_modeling_2019}, the biomethane production system is cost-optimized assuming that the process consists of four phases: collecting feedstock to hubs located according to zip code areas, transporting feedstock from hubs to reactor(s), transporting biomethane from reactor(s) to condenser(s), and shipping the liquefied biomethane from condensers to final demand points. More recently, \cite{aal2024matheuristic} provides a mathematical optimization model and a metaheuristic algorithm to maximize the profit of a biomass supply chain designed to commercialize electricity. The complete process of the so-called \emph{Waste to Biomethane Logistic Problem} is given in \cite{blanco2024waste}, where all the elements involved in the supply chain (from the collection of the manure/waste from farms to the provision of biogas to the end users) are integrated into a single mathematical optimization model.

In this paper, we focus on the first phase of the whole biogas supply chain system, which consists of collecting the waste and unload it to the so-called pre-treatment plants, where the manure or organic waste is prepared (dried, cleaned, etc.) to be delivered in the next phase to the anaerobic digestion plants to be transformed into biogas. In this phase, decisions on both, the locations of the pre-treatment plants and the design of the pickup routes  need to be made.
In previous approaches \citep[][]{blanco2024waste,jensen_optimizing_2017,sarker_modeling_2019}, it is assumed that a vehicle collects the waste from the farms and delivers it directly to their closest plant. This assumption can be restrictive and costly, and may result in inefficiencies when the number of farms is large, their production is significantly smaller than a truckload, and the budget for installing plants is limited. We propose here an alternative collection/drop-off policy based on two principles: (1) the vehicle can visit different pickup points to collect the product in the same route; and (2) the plants (drop-off points) can be located in intermediate positions in the vehicle's route, allowing the vehicle to unload the collected product and continue the route to other pickup points and plants. Thus, we provide a solution method based on mathematical optimization models to decide where to locate the plants that the agent can afford with a given budget, and how to design the visits to the pickup points and the plants to minimize the overall transportation costs. More specifically, we are given a finite set of pickup points, each endowed with a demand, a finite set of potential locations for the plants, and a capacity for the (single) vehicle that will pick up and deliver the product. We are also given a maximum budget to install the plants, and cost (distance/time) matrices to transport the product between the different types of locations involved in the decision problem. The goal is to find the optimal walk for a single vehicle to pick up all the demands by using the installed plants to drop-off the product. We call this problem the location-routing problem with Drop-Offs and Budget Constraints (DOBC).

\section*{Contributions}

  The main contributions of this work are the following:
  \begin{itemize}
      \item We introduce a new problem, the location-routing problem with Drop-Offs and Budget Constraint, %
      motivated by different practical applications.
      \item We provide a general and flexible mathematical optimization based framework that allows for different practical variants of interest.
      \item We present a mixed integer linear optimization model for the problem that considers all the elements with novel connectivity constraints.
      \item We develop an specialized two-phases branch-and-cut approach for separating the connectivity constraints.
      \item We report the results of an extensive battery of experiments on synthetic datasets that validate our proposal and explore some insights of our proposed decision aid tool.
  \end{itemize}

\section*{Organization}

The rest of the paper is organized as follows. { In Section \ref{S_review} we review the recent literature related to the DOBC problem.} We describe the problem in Section \ref{S_problem} , including its input data and the notation that we use through this paper. We illustrate different variants of the problem, as well as the assumptions that a solution must satisfy. Section \ref{sec:mathopt} is devoted to derive the mathematical optimization models that we propose to solve the problem, and the two-phase branch-and-cut that we develop to solve it. In Section \ref{sec:comput} we report the results of our computational experience. Finally, in Section \ref{sec:conclusions} we draw some conclusions and future lines of research on the topic.

\section{Literature Review}\label{S_review}

The DOBC, as far as we know, has not been previously analyzed in the literature, but it shares some characteristics with widely studied problems. On the one hand, the DOBC has some similarities with the Vehicle Routing Problem (VRP). The Vehicle Routing Problem (VRP) is a classical combinatorial optimization problem first introduced by \citet{dantzig1959truck}, whose goal is to find the least costly routes for a fleet of vehicles to pick up the demand of a set of customers and deliver it to one or more depots. There are many variants of this problem, such as requiring capacities for the vehicles (CVRP-Capacitated VRP), limitations on the service time for the users (VRPTW--VRP with Time Windows), etc.~\citep[see, e.g.][]{TothVigoVRP,survey22}. In the VRP, the routes start and end at the depot. Variants with multiple depots (MDVRP) have also been considered in the literature and, in that case, each demand point is allocated to one of the depots, and closed routes rooted at each of the depots provide service to all demand points allocated to it. When only paths starting at a depot and visiting several customers are sought, the problem is known as Open VRP (OVRP). Extensions of the above problems that allow for multiple visits to the customers are refered to as Split Delivery VRP(SDVRP) \citep[see e.g.][]{split}.  In the DOBC that we propose here, different plants are allocated to the same route, acting as intermediate drop-off points in the route, and additionally, we allow to split the demand, that is, when the capacity is limited, the vehicle can visit several times the same pickup point to load part of the demand until it is completely collected.

Another variant of the VRP that shares some characteristics with the DOBC is the single vehicle routing problem with deliveries and selective pick-ups (SVRPDSP). In the SVRPDSP \citep[][]{gribkovskaia2008single}, all the points act as both, pick-up and delivery points (with a pick-up demand and a delivery demand each), and the goal is to build a route starting at a given depot and picking up and delivering the required demands at all these points. However, as opposite to the DOBC there is a single depot and the designed route must start and end at its location. Other types of problems also require tracing routes by picking up and delivering a product, as in the \emph{one-commodity pick-up-and-delivery traveling salesman problem} (1-PDTSP) introduced in \cite{hernandez2004branch}. In the 1-PDTSP, the goal is to construct a minimum cost circuit for a vehicle to traverse a set of given demand points, where some of them have positive demands (pick-up points) and others have negative demands (delivery points). In the DOBC that we propose, the pick-up points and their demands are given, but the drop-off points are to be decided, and there are no requirements for the received amount at those locations (which will also be decided). There is no restriction about the number of plants that can be used for each pick-up point.  Moreover, in the DOBC, we provide a general framework where, although the route for the vehicle must be feasible to be traversed by a single vehicle, {and its starting and ending points must belong to the set of installed plants, they need not be necessarily the same one} (unless the decision maker explicitly desires this requirement in the design). Other versions of pick-up and delivery problems are proposed, for instance, in \cite{tajik2014robust,gribkovskaia2008one} and \cite{savelsbergh1995general}. Another problem that uses the terminology \textit{pick-up-and-delivery} is the Traveling Salesman Problem with pick-ups and Deliveries (TSPPD), introduced in \cite{dumitrescu2010traveling}, but in this problem there are no demands associated with the users, only paired pick-up-delivery requests, and the goal is to construct a route between the starting depot and the end depot satisfying the requests. So, this can be considered as a multy-commodity problem, as opposite to the DOBC.

None of the above extensions of the VRP and the TSP, considers any location decision. Location analysis is among the most active research areas in Operations Research, and the problems in this discipline consist of finding the positions of one or more entities that optimize some given measure of the service given to a fixed set of demand points. This general definition of a location problem allows one to classify as location problems those that appear in many different areas beyond the location of stores to satisfy the demand of a set of customers (as in the classical $p$-median~\cite{hakimi1964optimum} or the $p$-center~\cite{hakimi1965optimum} problems) but also to problems arising in Machine Learning, such as clustering~\citep{locationForClustering}, supervised classification~\citep{blanco2020lp,marin2022soft}, or fitting of hyperplanes~\citep{hiperplanos,blanco2021multisource}. For a recent monograph on Location Science, the reader is referred to the book \cite{LocationScience19}. Among all the versions of location problems, one can find those that combine location decisions with routing decisions (the VRP and its versions). This family of problems is called Location Routing (LR) problems~\citep[see e.g.][]{albareda2019location,laporte1988location}. In all these problems, the locational decisions are focused on the depots, and among all the available (open) depots, the demand points are typically allocated to one of them. Thus, if the location and allocation decisions are known (open depots and demand points allocated to each of them), the problem results in a VRP problem. In the DOBC, there are no proper depots involved in the problem, but the locational decisions are related to the intermediate drop-off points in the route. An extension of the 1-PDTSP that includes location decisions is considered in \cite{DOMINGUEZMARTIN2024106426}: the one-commodity pickup and delivery location routing problem (1-PDLRP). This is probably the problem in the literature that is closest to the DOBC. However, as opposite to the DOBC, in the 1-PDTSP exaclty one route has to be defined for each facility, that visits all its allocated customers and it is used to balance the demands of the pick-up and delivery points.

Another family of problems that also relates the DOBC is the one of \textit{Steiner Problems}~\citep[][]{dreyfus1971steiner,maculan1987steiner}. In Steiner problems, given a graph and a selected set of nodes of the graph, the goal is to construct a minimum cost structure (tree, path, cycle) that visits, at least, this selected set of nodes. In the DOBC, beyond the requirements of picking up the product from a set of points, the goal is to construct a route for a vehicle that contains, at least,  these pick-up points, but only some of others (the drop-off points) that will be intermediate points in the route. However, in contrast to Steiner problems, the drop-off points will be required to assure, not only a minimum-cost solution but also feasible solution with respect to the vehicle capacities. In particular, in the DOBC, at least one drop-off point is required to unload the demand picked up in the demand positions. Steiner problems have been widely analyzed in the literature, specially the Steiner Tree problem~\cite[see e.g.][among others]{ljubic2021solving,maculan2000euclidean,fischetti2017thinning,rodriguez2019steiner}.

We present a general mathematical optimization framework for the DOBC, allowing the decision maker to choose different characteristics for the problem, with some interesting particular cases. It is worth mentioning that, unlike other pick-up and delivery problems, we do not assume unsplitable demands, i.e., our model allows multiple visits to the pick-up and drop-off points. This situation is particularly useful in cases where the vehicle has limited capacity. Nevertheless, the decision maker provides an upper bound for the  number of visits to each point. If this upper bound is set to one for all the points, then a single visit is required. On the other hand, in the DOBC, as already mentioned, we do not assume the existence of a given depot. One can see the drop-off points as the depots, whose locations also have to be decided. However, the start and end drop-off points for the route must also be decided in this problem, although if they are known, they can be fixed before solving it, simplifying the decisions. Finally, we do not assume that the route to be traced by the vehicle is a tour with the same start and end point, but that the route is a generalized Eulerian walk, in the sense that it starts at a drop-off point and ends at a (potentially different) drop-off point. 

Table~\ref{tab:problemas} summarizes the main features of the problems related to the DOBC. 
\begin{table}[h!]
  \centering
    {\small\begin{tabular}{lccccccc}
          & \multicolumn{3}{c}{plants (drop-off points)} & \multirow{2}[3]{*}{n. routes} & \multicolumn{1}{c}{demand} & \multicolumn{1}{c}{route} & split \\
\cmidrule{2-4}          & fixed? & number & \#/route &      &   type     & topology & demands?\\
    \midrule
    \textbf{DOBC}  & no    & decide & decide & 1     & pick-up & flexible & yes\\
    VRP   & yes   & 1     & 1     & decide & deliver & closed  & no\\
    MDVRP & yes   & fixed & 1     & decide & deliver & closed & no\\
    SDVRP & yes   & fixed & 1     & decide & deliver & closed & yes\\
    OVRP  & yes   & fixed & 1     & decide & deliver & p-c path & no\\
    SVRPDSP & yes   & 1     & 1     & 1     & both  & closed & no\\
    1-PDTSP &  -    & 0     & 0     & 1     & either & closed & no\\
    TSPPD & yes   & 1     & 1     & 1     & by pairs & closed & no\\
    LRP   & no    & decide & 1     & decide & deliver & closed & no\\
    1-PDLRP & no & decide & 1 & decide & either & closed & no\\
    \end{tabular}}%
  \caption{Comparison of the DOBC with related problems.\label{tab:problemas}}
\end{table}%

\section{The single vehicle location-routing problem with Drop-Offs and Budget Constraints\label{S_problem}}

In this section we introduce the problem under study and set the notation for the rest of the paper. We present a general and flexible framework for the problem, although we will illustrate some special and interesting particular cases of our approach.

We are given a directed graph $G=(V,A)$, where $V$ is the set of nodes and $A$ is the set of arcs. 
A subset $P \subset V$ is considered as the set of pick-up points, and each of them, $v\in P$,  is endowed with a demand $p_v \geq 0$. A subset $U \subset V$ is also given, indicating the nodes that can be activated as \emph{drop-off} (or unloading) points.  The installation or activation of a drop-off point $w\in U$ has a setup cost $F_w$ and a limited budget, $B$, is available. We assume, without loss of generality, that $P \cup U = V$. Each arc $a \in A$ has associated two costs: a cost for traversing the arc, $C_a >0$; and a transportation cost per unit flow, $C_a'$. We also assume that a single vehicle will collect and unload the demand and that it has a given capacity $\rho>0$. 

The goal of the \textit{location-routing problem with Drop-Offs and Budget Constraints} (DOBC) is to construct a route of \emph{minimun cost} that visits all points in $P$ at least once to pick-up all their demands, and to decide the subset of the potential points in $U$, that can be afforded for the given budget $B$, that must be activated to unload the collected product.

To clarify the requirements to be satisfied by a decision made for this problem, in what follows we list some of the considerations to be taken into account: 
\begin{itemize}
\item The demand collected in the pick-up up points must be unloaded and stored at some of the potential drop-off points, which have to be previously installed.
\item The total demand at the pick-up points must be collected. It means that the vehicle should visit at least once (but possibly more) each pick-up point where the demand is stored to load it and drop it off in one of the installed drop-off points. If the capacity of the vehicle, $\rho$, is smaller than the demand at some of the pick-up points it would imply that multiple visits to the pick-up points are required. Even in case the demands are smaller than the capacity, it may be convenient to visit multiple times the pick-up points since the vehicle is not required to automatically drop-off the product and can visit other pick-up points in its route to the drop-off points.
\item The number of visits allowed to the pick-up points is bounded. Otherwise, the implementation of an excessive number of visits in practical applications would be unrealistic, and a vehicle with larger capacity must be considered. Thus, we assume that node $v\in P$ can be visited at most $m_v \in \mathbb{Z}_{>0}$ times to pick-up its demand and that, therefore, its demand  may be split. 
\item The drop-off points are considered as uncapacitated, that is, they can be visited as many times as desired. This assumption allows one to model most of the practical situations where this problem is useful, which is the case when the storage facilities (drop-off points) have larger capacity than the demand to be collected in the pickup points. This assumption implies that in an optimal decision for the problem there will be no arcs in $A$ linking two drop-off points since there is no need to transport the product at no gain.
\item To prevent, in case the budget $B$ is large, the installation of drop-off points to unload  only a small amount of the demand, a minimum amount of the demand, $\eta>0$, is required to be delivered at the activated drop-off points.

\item The route must be able to be traversed by a single vehicle. Thus, the subgraph obtained in the DOBC must be a $\mathbf{m}$-\textit{spanning walk} for the pickup and activated drop-off points, i.e., a sorted sequence of arcs in $A$ in which the ending point of each arc is the starting point of the next arc in the sequence and where each node  $v \in P$ 
appears as starting or ending node of the arcs, at least once  and at most $m_v$ times and, each installed  $w \in U$ appears  at least once. If desired, the decision maker can require the walk to be a \textit{closed walk}, that is, the starting and ending points of the walk coincide, being the resulting subnetwork a cycle.
\item  A solution of the DOBC is assessed by means of a convex combination of the \emph{design costs} and the \emph{flow costs}. The parameter that represents this trade-off between the two goals is denoted by $\alpha\in [0,1]$ (considering only flow costs for $\alpha=0$  and only design costs for $\alpha=1$).
\end{itemize}

In Table \ref{table_notation} we summarize the parameters that we use for the DOBC.

\begin{table}[h]
{\small\centering\begin{tabular}{cl}
    \texttt{Parameter/Sets} & \texttt{Description}\\\hline
    $G=(V,A)$ & Input Graph where the route is constructed.\\
    $P \subset V$ & Pickup points.\\
    $U \subset V$ & Potential drop-off points.\\
    $\{p_v\}_{v\in P}$ & Demand to be collected at the pickup points.\\
    $\{F_w\}_{w\in U}$ & Set-up cost of the drop-off points.\\
    $\{C_a\}_{a\in A}$ & Cost for traversing the arcs.\\
    $\{C'_a\}_{a\in A}$ & Per unit transportation costs on the arcs.\\
    $\rho$ & Capacity of the vehicle.\\
    $B$ & Budget for installing the drop-off points.\\
    $\{m_v\}_{v\in P}$ & Maximum number of visits to the pickup points.\\
    $\eta$ & Minimum demand to be stored at the activated drop-off points.\\
    $\alpha$ & Parameter defining the convex combination of design and flow costs.\\ \hline
    \end{tabular}}
\caption{Input parameters for the DOBC.\label{table_notation}}
\end{table}

\begin{ex}\label{ex:0}
    We illustrate the DOBC with the toy instance drawn in Figure \ref{fig:00} (left plot). We consider $6$ pick-up points ($a_1, \ldots, a_6$) each of them with a demand of $50$ units. We also consider $5$ potential locations for the drop-off points, $b_1, \ldots, b_5$, squared in the picture. The set up cost for installing the facilities is the same, and we assume that there is budget to open at most $3$ of these points. The vehicle picking up and unloading the product has a maximum capacity of $150$ units.
      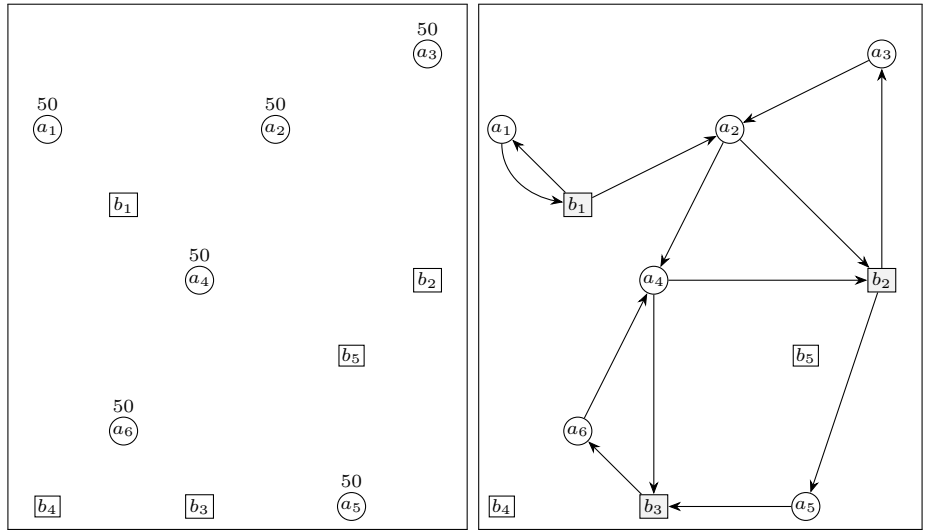
\begin{figure}[h]
\begin{center}
\fbox{
\begin{tikzpicture}[scale=1]
\coordinate (A1) at (1,6);
\coordinate (A2) at (4,6);
\coordinate (A3) at (6,7);
\coordinate (A4) at (3,4);
\coordinate (A5) at (5,1);
\coordinate (A6) at (2,2);
\coordinate (B1) at (2,5);
\coordinate (B2) at (6,4);
\coordinate (B3) at (3,1);
\coordinate (B4) at (1,1);
\coordinate (B5) at (5,3);

\node[circle,draw,inner sep=0.5pt](A-1) at (A1) {\tiny $a_1$};
\node[circle,draw,inner sep=0.5pt](A-2) at (A2) {\tiny $a_2$};
\node[circle,draw,inner sep=0.5pt](A-3) at (A3) {\tiny $a_3$};
\node[circle,draw,inner sep=0.5pt](A-4) at (A4) {\tiny $a_4$};
\node[circle,draw,inner sep=0.5pt](A-5) at (A5) {\tiny $a_5$};
\node[circle,draw,inner sep=0.5pt](A-6) at (A6) {\tiny $a_6$};
\node[rectangle,draw, inner sep=1.5pt](B-1) at (B1) {\tiny $b_1$};
\node[rectangle,draw, inner sep=1.5pt](B-2) at (B2) {\tiny $b_2$};
\node[rectangle,draw, inner sep=1.5pt](B-3) at (B3) {\tiny $b_3$};
\node[rectangle,draw, inner sep=1pt](B-4) at (B4) {\tiny $b_4$};
\node[rectangle,draw, inner sep=1pt](B-5) at (B5) {\tiny $b_5$};

\node[above, inner sep=1.5pt, label=90:{\tiny $50$}] at (A-1) {};
\node[above,  inner sep=1.5pt,label=90:{\tiny $50$}] at (A-2) {};
\node[above,  inner sep=1.5pt,label=90:{\tiny $50$}] at (A-3) {};
\node[above,  inner sep=1.5pt,label=90:{\tiny $50$}] at (A-4) {};
\node[above,  inner sep=1.5pt,label=90:{\tiny $50$}] at (A-5) {};
\node[above,  inner sep=1.5pt,label=90:{\tiny $50$}] at (A-6) {};

\end{tikzpicture}}~\fbox{\begin{tikzpicture}[scale=1]
\coordinate (A1) at (1,6);
\coordinate (A2) at (4,6);
\coordinate (A3) at (6,7);
\coordinate (A4) at (3,4);
\coordinate (A5) at (5,1);
\coordinate (A6) at (2,2);
\coordinate (B1) at (2,5);
\coordinate (B2) at (6,4);
\coordinate (B3) at (3,1);
\coordinate (B4) at (1,1);
\coordinate (B5) at (5,3);

\node[circle,draw,inner sep=0.5pt](A-1) at (A1) {\tiny $a_1$};
\node[circle,draw,inner sep=0.5pt](A-2) at (A2) {\tiny $a_2$};
\node[circle,draw,inner sep=0.5pt](A-3) at (A3) {\tiny $a_3$};
\node[circle,draw,inner sep=0.5pt](A-4) at (A4) {\tiny $a_4$};
\node[circle,draw,inner sep=0.5pt](A-5) at (A5) {\tiny $a_5$};
\node[circle,draw,inner sep=0.5pt](A-6) at (A6) {\tiny $a_6$};
\node[rectangle,draw, fill=gray!10, inner sep=1.5pt](B-1) at (B1) {\tiny $b_1$};
\node[rectangle,draw, fill=gray!10, inner sep=1.5pt](B-2) at (B2) {\tiny $b_2$};
\node[rectangle,draw, fill=gray!10, inner sep=1.5pt](B-3) at (B3) {\tiny $b_3$};
\node[rectangle,draw, inner sep=1pt](B-4) at (B4) {\tiny $b_4$};
\node[rectangle,draw, inner sep=1pt](B-5) at (B5) {\tiny $b_5$};

\node[above, inner sep=1.5pt, label=90:{\tiny \phantom{50}}] at (A-3) {};

\draw[->, arrows={-Stealth}] (B-1)--(A-1);
\draw[->, arrows={-Stealth}] (A-1)  to[out=270,in=170] (B-1);
\draw[->, arrows={-Stealth}] (B-1)--(A-2);
\draw[->, arrows={-Stealth}] (A-2)--(B-2);
\draw[->, arrows={-Stealth}] (B-2)--(A-3);
\draw[->, arrows={-Stealth}] (A-3)--(A-2);
\draw[->, arrows={-Stealth}] (A-2)--(A-4);
\draw[->, arrows={-Stealth}] (A-4)--(B-2);
\draw[->, arrows={-Stealth}] (B-2)--(A-5);
\draw[->, arrows={-Stealth}] (A-5)--(B-3);
\draw[->, arrows={-Stealth}] (B-3)--(A-6);
\draw[->, arrows={-Stealth}] (A-6)--(A-4);
\draw[->, arrows={-Stealth}] (A-4)--(B-3);
\end{tikzpicture}}
\end{center}
\caption{Instance for Example \ref{ex:0} (left) and a feasible solution (right).\label{fig:00}}
\end{figure}
In Figure \ref{fig:00} (right plot) we show a feasible (not necessarily optimal) solution for the DOBC. Note that we do not require the route to be a closed walk. The route in this case can be represented as the path:
{\small
$$
\squared{$b_1$} \longrightarrow \stackrel{\text{\tiny 50}}{\circled{$a_1$}} \stackrel{50}{\longrightarrow} \squared{$b_1$} \longrightarrow \stackrel{\text{\tiny 25}}{\circled{$a_2$}} \stackrel{25}{\longrightarrow} \squared{$b_2$} \longrightarrow \stackrel{\text{\tiny 50}}{\circled{$a_3$}} \stackrel{50}{\longrightarrow} \stackrel{\text{\tiny 25}}{\circled{$a_2$}} \stackrel{75}{\longrightarrow} \stackrel{\text{\tiny 25}}{\circled{$a_4$}} \stackrel{100}{\longrightarrow} \squared{$b_2$} \longrightarrow \stackrel{\text{\tiny 50}}{\circled{$a_5$}} \stackrel{50}{\longrightarrow} \squared{$b_3$} \longrightarrow \stackrel{\text{\tiny 50}}{\circled{$a_6$}} \stackrel{50}{\longrightarrow} \stackrel{\text{\tiny 25}}{\circled{$a_4$}} \stackrel{75}{\longrightarrow} \squared{$b_3$}
$$}
where arrows represent the sequence of visited nodes, the text over the arrows indicates the load of the vehicle in this arc, and the text over the pick-up nodes indicates the amount of the total demand picked up in the visit. 

Note that both the pick-up nodes, $P=\{a_1,\cdots , a_6\}$, and the activated drop-off points, $\{b_1, b_2, b_3\}\subset U$, are allowed to be visited multiple times in the trip. Thus, the demand of the pick-up nodes can be split (as in nodes $a_2$ and $a_4$). 
\end{ex}

\begin{thm}
    The DOBC is NP-hard.
\end{thm}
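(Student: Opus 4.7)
The plan is to prove NP-hardness by a polynomial reduction from the directed Hamiltonian Path problem (HP). The idea is to switch off almost every ingredient of the DOBC that is not already present in HP, so that the remaining decision coincides with deciding whether a given digraph admits a Hamiltonian path.

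Given an instance of HP on $G'=(V',E')$ with $n=|V'|$, I construct a DOBC instance as follows. Let $V=V'\cup\{w\}$, $P=V'$, $U=\{w\}$, with $p_v=m_v=1$ for each $v\in P$; set the capacity $\rho=n$, the set-up cost $F_w=0$ with budget $B=0$, and the minimum-delivery threshold $\eta=1$. For the arcs, include every $(u,v)\in E'$ with cost $C_{(u,v)}=1$, together with arcs $(w,v)$ and $(v,w)$ for every $v\in V'$, all with cost $M:=n+1$; take $C'_a=0$ and $\alpha=1$ so the objective reduces to the sum of traversal costs of the arcs used. The threshold to compare the optimum against is $K:=2M+(n-1)$.

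Since $w$ is the only activated drop-off, every feasible walk both starts and ends at $w$, and since $m_v=1$ forces each pick-up node to be visited exactly once, the walk decomposes uniquely into $k\ge 1$ excursions that leave $w$, follow a simple directed path in $G'$, and return to $w$, the $k$ paths partitioning $V'$. Such a walk uses exactly $2k$ arcs incident with $w$ and $n-k$ arcs of $G'$, so its cost equals $2kM+(n-k)$. With $M=n+1$ this quantity attains the threshold $K$ precisely when $k=1$, that is, exactly when $V'$ can be covered by a single simple directed path of $G'$; in other words, the DOBC instance admits a solution of value at most $K$ if and only if $G'$ has a Hamiltonian path. Since the construction is polynomial in $n$, the claim follows.

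The main obstacle, though essentially bookkeeping, is to verify that the additional DOBC ingredients do not interfere with the equivalence: that the capacity $\rho=n$ is large enough for the vehicle to complete any single excursion without an intermediate drop-off; that the minimum-delivery condition $\eta=1$ is met because all $n$ units are ultimately deposited at $w$; that the budget $B=0$ accommodates the only activated drop-off since $F_w=0$; and, crucially, that the constraint $m_v=1$ really forces every excursion to be a simple path in $G'$, so that the cost formula $2kM+(n-k)$ is exact and not merely an upper or lower bound. Once these checks are in place, the combinatorial core of the reduction is precisely the Hamiltonian path condition and the hardness result follows.
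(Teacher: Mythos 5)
Your reduction is correct, but it takes a genuinely different route from the paper. The paper proves hardness by restriction: setting $\rho > \sum_{v\in P} p_v$, $B=\min_{w\in U}F_w$ and $\alpha=1$, it observes that the resulting special case \emph{is} the TSP on $P\cup\{w\}$ for the cheapest $w$, and stops there. You instead give an explicit Karp reduction from directed Hamiltonian Path: one drop-off point, unit arc costs inside the original digraph, and a large cost $M=n+1$ on all arcs incident to $w$, so that the optimal walk decomposes into $k$ excursions of total cost $k(2M-1)+n$, minimized exactly when a single simple path covers all pick-up nodes. The two approaches buy different things. The paper's argument is shorter, but its claim that the special case ``is the TSP'' silently ignores that the vehicle may return to $w$ several times mid-route (so the special case is really a graphical/Steiner-type TSP unless costs are assumed metric), and that $m_v$ may exceed $1$. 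Your construction handles both issues head-on: $m_v=1$ forces each pick-up node to appear exactly once, connectivity of the walk rules out cycles disjoint from $w$, and the penalty $M$ makes the number of returns to $w$ the dominant term, so the threshold $K=2M+(n-1)$ cleanly separates the Hamiltonian and non-Hamiltonian cases. Your instance is also always feasible (single-vertex excursions are available), which keeps the equivalence honest. The bookkeeping you flag at the end (capacity, $\eta$, budget, simplicity of excursions) all checks out, so the argument is complete; it is simply a more self-contained hardness proof than the paper's one-line restriction to the TSP.
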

\begin{proof}
For $\rho > \sum_{v\in P} p_v$, $B = \min_{w\in U} F_w$ and $\alpha=1$, the problem reduces to the traveling salesman problem (TSP) for the points in $P \cup \{w\}$ where $w \in \arg\min_{w\in U} F_w$, which, it is well known, is an NP-hard problem.
\end{proof}

\section{Mathematical Optimization Model for the DOBC}\label{sec:mathopt}

In this section we describe the mathematical optimization model that we propose for the problem. First of all, we describe a re-parameterization of the input graph to allow for a route with multiple visits to the pickup points. Then, we derive a mathematical optimization formulation for the problem, by describing the decision variables and constraints. The model will require the incorporation of exponentially many constraints, and then, we develop a 2-phase branch-and-cut approach to incorporate them on-the-fly.

\subsection{The extended graph}

As already mentioned, multiple visits are allowed to the pick-up points, with at most $m_v \in \mathbb{Z}_{>0}$ visits permitted for each $v \in P$. The demand at each point is then split among the different visits. To control the proportion of demand collected during the visits to a pick-up point, we construct an extended graph of $G$ by replicating the vertices in $P$, and consequently their incident arcs, according to the maximum number of allowable visits to each node. The vertices in $U$ do not need to be replicated, as drop-off points are uncapacitated, and the entire product accumulated in the vehicle is unloaded at each visit. Nevertheless, by convention we extend the number of visits to all the nodes in $V$ by assuming that $m_w=1$ for all $w\in U$.

 To construct the extended graph of $G$, node $v\in V$ is replicated $m_v$ times  and arc $a \in A$ in the form $a=(v,w)$ with $v, w \in V, \; v\neq w$, is also replicated $m_v \times m_w$ times, that is, each copy of node $v$ is linked to all copies of node $w$. In what follows, we will denote by $\tP$ the set of replicated pick-up nodes, and by  $\tG=(\tV,\tA)$ the resulting extended directed graph, where $\tV= \tP\cup U$  is the set of replicated vertices and $\tA$ is the set of replicated arcs. In case a single visit is allowed for all the pick-up points, $\widetilde{G}$ coincides with $G$. Thus, from now on, the graph that we use in our model is $\tG$. We denote by $\pi: \tG \rightarrow G$ the \emph{forgetful mapping} that maps the nodes and arcs in the extended graph to the nodes and arcs in the original graph (aggregating the replicas). 

 Note that the graph $\tG$ constructed from $G$ depends on the number of visits allowed to each  point ($m_v$, for $v\in V$), although we do not incorporate this in the notation to simplify it.

Given any set $S \subseteq \tV$ we denote by $\delta(S)$ its cut-set: $\delta(S)=\{(v,w) \in \tA: v \in S \text{ or } w \in S\}$. In case $S=\{v\}$, we denote $\delta(\{v\})=\delta(v)$. Analogously we denote the sets $\delta^+(S)=\{(v,w) \in \tA: v \in S, w \not \in S\}$, $\delta^-(S)=\{(v,w) \in A: w \in S, v \not \in S\}$.

\subsection{Variables}

In what follows we describe the decision variables that we use in our model.

For the sake of describing the routes picking up and dropping off  the product we use two different sets of variables: (a) design variables that determine which nodes and links between two pick-up points, or between pick-up points and drop-off points are activated; and (b) flow  variables that determine the amount of product that is distributed through the different links.  

The first set of variables within the design variables determines the usage of the nodes in $\tilde{G}$, including both the (replicated) pick-up points and the drop-off points.
$$
y_v = \begin{cases}
    1 & \mbox{if $v$ is used in the route.}\\
    0 & \mbox{otherwise}
\end{cases} \ \ \forall v \in \tV.
$$

Note that if $y_w=1$ with $w\in U$, the above variable indicates that the potential drop-off point is activated. Otherwise, if $v\in \tP$, this variable gives us information about which \emph{replica} of the pick-up node is used in the route (visits to the node). For a given pickup point in the original set of pickup points $P$, we call \emph{active} visit to the extended set of pickup points that are used in the walk of the vehicle.

The next decisions that we have to make are on the usage of the arcs in $\widetilde{A}$ in the pick-up and drop-off spanning walk. These decisions are determined in our model by the following sets of design variables:
$$
x_{a} = \begin{cases}
    1 & \mbox{if the walk uses arc $a$},\\
    0 & \mbox{otherwise}
\end{cases} \ \ \forall a \in \tA
$$
Note that, with our extended graph these variables will take value one if both the arc in the original graph is activated and it is linking the corresponding active replicas.

Additionally, the activated links will be used to distribute the product. The amount of product transported through each active arc will be determined by the following flow variables:
$$
f_{a} = \text{amount of demand transported through arc $a$}, \ \ \forall a \in \tA.
$$
Finally, we consider a variable that determines how the demand of a pick-up point is split within the different visits:
$$
q_v \in [0,1]: \text{proportion of demand $p_{\pi(v)}$ collected in the visit to $v$} \ \ \forall v\in \tP.
$$

\subsection{Objective Function}

 With the above sets of variables, we construct the following measures of the transportation costs that we will consider in our model:
\begin{itemize}
    \item {\bf Design Cost}. The overall length/cost of the constructed network is the fixed cost for routing the vehicle:
    $$
    \Gamma(x) = \sum_{a \in \tA}  C_{a} x_{a}
    $$
    \item {\bf Flow Cost}. The amount of product at the vehicles running each arc may affect the cost of the route that is: 
     $$
    \Xi(f) = \sum_{a \in \tA}  C'_a f_{a}
    $$  
\end{itemize}
Both costs are combined in the single objective measure $\alpha \Gamma(x) +  (1-\alpha) \Xi(f)$, where $\alpha\geq 0$ is the weight that defines the trade-off between the design and the  flow cost.

Note that, depending on the particular application of the DOBC, a different value of $\alpha$ must be considered. In case the demand to be transported is not excessive and the transportation costs for the arcs are small, the costs per unit flow would be negligible compared to the design costs. Thus, a value of $\alpha$ closer to $1$ is more convenient. Otherwise, if the product to be transported highly affects the cost of the vehicle along the walk, a value closer to $0$ would be more desirable. In general, an intermediate value of $\alpha$ allows to consider both factors in the optimal decision.

\subsection{Constraints}

The above sets of variables are linked via the following sets of linear inequalities to assure their correct definition as well as the technical requirements of the problem:
\begin{itemize}
    \item The limited budget to install drop-off points is enforced by the following inequality:
    \begin{equation}
    \sum_{w \in U} F_w y_w \leq B.\label{mC:ctr1}
    \end{equation}
    Observe that in case all the drop-off points have the same set-up cost, i.e., $F_w=F$, for all $w\in U$, the above inequality reduces to:
    \begin{equation}
        \sum_{w\in U} y_w \leq p := \left\lceil\frac{B}{F}\right\rceil, \label{mC:ctr2}
    \end{equation}
    providing an upper bound for the number of drop-off points to open.
    \item No links are allowed incident to non-open drop-off points:
    \begin{align*}
        x_a \leq y_w, \forall w \in U,  a \in \delta(w).
    \end{align*}
These constraints can be aggregated as follows:
\begin{align}
\sum_{a\in \delta(w)}  x_{a} \leq 2|\tP|\, y_w, \forall w \in U.\label{mC:ctr3}
\end{align}
In case a potential drop-off point is not considered to unload the product, all the variables indicating links to them (incoming from pick-up points and outgoing to pick-up points) are set to zero.
\item The whole demand of each pick up point is split among the active visits to the pickup points:
\begin{align}
    \sum_{v' \in \tP: \pi(v')=v} q_{v'} = 1, \forall v\in P, \label{mC:ctr4}
    \end{align}
It is also required to enforce that the vehicle is not allowed to pick-up any amount of demand from a replica of a pickup point that is not active:
\begin{align}
     q_v \leq y_v, \forall v \in \tP.\label{mC:ctr5}   
\end{align}
and that the pick-up nodes must be visited at least once:
\begin{align}
    \sum_{v' \in \tP: \pi(v')=v} y_{v'}\geq 1, \forall v \in P. \label{mC:ctr5b}
\end{align}
    \item There is a single outgoing arc and a single incoming arc for the active visits to the pick-up points:
\begin{align}
     \sum_{a \in \delta^+(v)} x_{a} &= y_v,  \forall v \in \tP, \label{mC:ctr6a}\\
      \sum_{a \in \delta^-(v)} x_{a} &= y_{v}, \forall v \in \tP. \label{mC:ctr6b}
\end{align}
Note that  the left hand side expressions indicate the number of outgoing/incoming arcs from/to  the nodes in $\tP$. In case $y_v=1$, the above expressions enforce that a single arc emanates from/to the node $v$ (an active replica of the original node $\pi(v)$). In case $y_v=0$, no arcs connecting this replica are  allowed.
\item Flow balancing constraints at the replicas of pick-up points state that the outgoing amount of product  at a pick-up point must coincide with the incoming amount at that point plus the split of the demand collected at that active visit:
\begin{align}
     \sum_{a \in \delta^+(v)} f_{a} - \sum_{a \in \delta^-(v)} f_{a} = q_v p_{\pi(v)}, \forall v \in \tP.\label{mC:ctr7}
\end{align}
\item The product is limited to be transported only through the active arcs, and in case it is active it is bounded above by the capacity of the vehicle:
\begin{align}
& f_{a} \leq \rho x_{a}, \ \ \forall a \in \tA\label{mC:ctr8a}.
\end{align}
\item A minimum amount of demand is required to be received at the open drop-off points:
\begin{align}
 \sum_{a \in \delta^-(w)}   f_{a} \geq \eta y_{w}, \ \ \forall  w \in U, \label{mC:ctr9}
\end{align}
Note that in case the budget $B$ is large, one might install drop-off points to store only a small amount of the product. The above inequality prevents this situation.
\item A single vehicle must be able to traverse all active pick-up and drop-off points in the route. This is assured by imposing that the resulting network is an Eulerian walk, which is equivalent to enforce that there is at most a pair of drop-off points with net degree ($|$in-degree $-$ out-degree$|$) equal to one and it is zero for all the other vertices. By Constraints \eqref{mC:ctr6a} and \eqref{mC:ctr6b}, each of the replicas of the  pick-up points has $0$ net degree  ($|0-0|$ if it is not active  or $|1-1|$, if it is active). We only impose this condition to the active drop-off points. 

To do so, we incorporate the following two additional sets of variables:
$$
e_w = \begin{cases}
    1 & \mbox{if drop-off point $w$ has in-degree $>$ out-degree},\\
    0 & \mbox{otherwise}
\end{cases}\quad \forall w \in U,
$$
$$
s_w = \begin{cases}
    1 & \mbox{if drop-off point $w$ has in-degree $<$ out-degree},\\
    0 & \mbox{otherwise}
\end{cases} \quad \forall w \in U.
$$
These variables are adequately defined by the following equations:
\begin{align}
    & \sum_{a \in \delta^+(w)} x_{a} -\sum_{a \in \delta^-(w)}   x_{a} = s_w-e_w, \forall w \in U, \label{mC:ctr10}
    \end{align}
Using these variables, we impose that the resulting graph is Eulerian or semi-Eulerian with the following constraints:
\begin{align}
& \sum_{w \in U} e_w\leq 1,  \label{mC:ctr11a}\\
& \sum_{w \in U} s_w\leq 1,  \label{mC:ctr11b}\\
& \sum_{w \in U} e_w- \sum_{w \in U} s_w= 0,  \label{mC:ctr11c}\\
& e_w + s_w \leq y_w,\; \forall w\in U, \label{mC:ctr11d}
\end{align}
Constraints \eqref{mC:ctr11a} and \eqref{mC:ctr11b} impose that there is at most one drop-off point with in-degree $>$ outdegree and at most one drop-off point with out-degree $>$ in-degree, respectively.  Furtheremore, Constraint \eqref{mC:ctr11c} enforces that in case there is an drop-off point with  out-degree $>$ in-degree, then there must be an drop-off point with in-degree $>$ out-degree. Indeed, these constraints are not really necessary, since these are the only nodes with odd-degree in the graph, and their number must be even, but we include them for completeness of the formulation. Finally, by Constraints \eqref{mC:ctr11d} these two points must be different. Then, the resulting network is an Eulerian walk. The starting and ending points of the walk can be detected using these variables. In case $e_w=1$, for some $w\in U$, the drop-off point $w$ is the ending point in the walk, whereas if $s_w=1$, then the drop-off point is the starting point in the walk.

In case all the drop-off points have in-degree $=$ out-degree, the resulting network is an Eulerian cycle, otherwise, the network is an Eulerian trail. This condition can be imposed avoiding the use of these variables, as we will explain in subsection \ref{versions}.
\end{itemize}

\subsection{Connectivity Constraints}

When solving the DOBC, the resulting directed network (defined  by the $x$- an $y$-variables) must be \emph{weakly} connected in order to be traversed by a single vehicle. 
It should be possible to trace a path from the starting to the ending drop-off location visiting all the activated vertices of the extended graph.

Imposing connectivity of a network derived from an optimization problem has been widely studied in the literature, and there are different approaches to incorporate this condition in the shape of linear inequalities (see e.g. \cite{frank1995connectivity}). We adopt a cut-set based methodology~\cite{boyd1993integer,partition2000} to impose the connectivity of the resulting network. This type of approaches have been successfully applied to several network design problems~\cite{grotschel1992computational,hubsVEY2023,corberan2021distance}.
In our case, we propose a family of cut-set based inequalities to enforce the solution digraph to be connected, and then feasible to route the product through its vertices with a single vehicle. Since these constraints are exponentially many, we will provide in Section \ref{S:branch-cut} a separation approach to detect its violation for a feasible solution (when these constraints are relaxed).

The idea behind our connectivity constraints is that given any   subset of nodes, $S \subsetneqq \tV$, such that both, set $S$ and its complement set, $S^c$, are not empty sets in the directed solution network,  if the starting  point of the route  (i.e.,  drop-off point $w\in U$ such that, $s_w=1$)  is not in  $S^c$ (because either, the starting point is in $S$, or the resulting route is a cycle and there is not any starting point) then, there must be at least an activated arc 
from a node in $S$ to a node in $S^c$.

\begin{prop}
The following constraints ensure the connectivity of the subnetwork defined by a solution:
\begin{align}
\sum_{a \in \delta^+(S)}  x_a \geq y_v+ y_{v'}-1-\sum_{w\in S^c\cap U} s_w, 
\quad  \forall S\subset \tV,\; v \in S\cap \tP,\; v' \in S^c\cap\tP.\label{mC:ctr12bb}
\end{align}
\end{prop}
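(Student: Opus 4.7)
The plan is to argue by contradiction: assume that all other constraints of the formulation hold alongside the family \eqref{mC:ctr12bb}, yet the subnetwork $N$ induced by the arcs $a\in\tA$ with $x_a=1$ and the nodes $v\in\tV$ with $y_v=1$ is not weakly connected. I would then decompose $N$ into (at least) two weakly connected non-trivial components $C$ and $D$, and exhibit a choice of cut $S$ and vertices $v, v'$ for which \eqref{mC:ctr12bb} is violated.

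The first building block I would need is an auxiliary claim: every weakly connected component of $N$ carrying at least one active arc must also contain an active pickup replica, i.e., some $v \in \tP$ with $y_v = 1$. This can be derived by combining the lower bound \eqref{mC:ctr9} on inflow at each open drop-off, the flow-design linkage \eqref{mC:ctr8a}, the flow balance \eqref{mC:ctr7}, and the inequality \eqref{mC:ctr5} that forces $q_v = 0$ whenever $y_v = 0$. In any hypothetical component lacking an active pickup replica, no flow is generated internally, and since arcs cannot cross between components, the total inflow at every open drop-off in that component vanishes, contradicting $\sum_{a \in \delta^-(w)} f_a \geq \eta y_w > 0$. Components carrying no active arc at all collapse to isolated unused vertices and can be ignored.

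With this auxiliary claim in hand, I would split on the two cases encoded by \eqref{mC:ctr11a}--\eqref{mC:ctr11d}. In the Eulerian trail case, exactly one $w^* \in U$ satisfies $s_{w^*} = 1$; let $C$ be the component of $N$ containing $w^*$ and let $D$ be any other active component. Both $C$ and $D$ contain active pickups by the auxiliary claim, so one can select $v \in C \cap \tP$ and $v' \in D \cap \tP$ with $y_v = y_{v'} = 1$. Setting $S$ to be the vertex set of $C$, the fact that $w^* \in S$ forces $\sum_{w \in S^c \cap U} s_w = 0$, so the right-hand side of \eqref{mC:ctr12bb} evaluates to $1+1-1-0 = 1$. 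But since $C$ and $D$ lie in different components of $N$, no active arc leaves $S$, so the left-hand side equals $0$, which contradicts \eqref{mC:ctr12bb}. The Eulerian cycle case is treated identically, with $C$ now chosen as any active component: all $s_w$ vanish, the right-hand side is still $1$, and the same cut argument applies.

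The principal obstacle I anticipate is the auxiliary claim that every non-trivial active component contains at least one active pickup replica; it requires a careful chaining of flow conservation at pickups, the minimum-throughput constraint at drop-offs, and the flow-design bound, together with the observation that flow cannot be imported from outside a component. Once that building block is secured, the remaining cut-set manipulation is essentially bookkeeping, the only subtlety being the case distinction on whether the unique $s_{w^*}$ (if any) lies in $S$ or in its complement, which is precisely what the last term in the right-hand side of \eqref{mC:ctr12bb} was designed to absorb.
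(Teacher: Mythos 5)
Your argument follows essentially the same route as the paper's, only written out in full. The paper's closing remark --- that it suffices to take $v,v'\in\tP$ because Constraints \eqref{mC:ctr8a} and \eqref{mC:ctr9} tie every activated drop-off point to the pickup nodes --- is exactly your auxiliary claim, and your cut-set contradiction with $S$ equal to the vertex set of one weak component (chosen to contain the unique starting drop-off $w^*$, if any, so that the term $\sum_{w\in S^c\cap U}s_w$ vanishes) is the intended content of the paper's first paragraph. The trail/cycle case split, and the observation that no active node can be isolated (by \eqref{mC:ctr6a}, \eqref{mC:ctr6b} and \eqref{mC:ctr9}), are correct bookkeeping.

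The one step that does not go through exactly as written is inside the auxiliary claim: from ``no flow is generated internally'' you infer that every open drop-off in a pickup-free component has zero inflow. But the model imposes flow balance only at pickup replicas (Constraint \eqref{mC:ctr7}); there is no conservation equation at drop-off points, so an arc leaving an open drop-off may carry positive flow that was never ``generated'' anywhere. Concretely, a component consisting of two open drop-offs joined by a directed $2$-cycle with $f_a=\eta$ on each arc satisfies \eqref{mC:ctr3}, \eqref{mC:ctr8a}, \eqref{mC:ctr9} and the degree constraints, yet no inequality of the family \eqref{mC:ctr12bb} separates it, since both $S\cap\tP$ and $S^c\cap\tP$ are required to be nonempty. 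Closing this hole needs the additional observation that arcs joining two drop-off points are never used in the solutions of interest (the paper makes this remark informally when describing the problem: since $C_a>0$, such arcs and the corresponding $y_w$ can be deleted from any optimal solution). This weakness is inherited from, and is no worse than, the paper's own proof; once that observation is made explicit, your argument is complete.
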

\begin{proof}
The right-hand side  can take a strictly positive value only when $v$ and $v'$ are active nodes used in the route (which means that  sets $S$ and $S^c$ are non-empty) and the starting  point of the route (if any) does not belong to $S^c$. In this case, its value is $1$ and  the inequality imposes that there must be at least an active arc going from  a vertex  in $S$  to a vertex in $S^c$. 

Notice that to assure the connectivity is sufficient to consider nodes $v, v'$ in $\tP$ since  Constraints \eqref{mC:ctr8a} and \eqref{mC:ctr9} assure the connectivity from nodes in $\tP$ and   any activated drop-off point.
\end{proof}

\subsection{The model\label{S: model}}
Summarizing, the model we propose for the DOBC is the following:
\begin{align}
\min & \quad\alpha \Gamma(x) +  (1-\alpha) \Xi(f)\nonumber\\
\mbox{s.t.} & \quad \sum_{w \in U} F_w y_w \leq B,&\eqref{mC:ctr1}\nonumber\\
 &\sum_{a\in \delta(w)} x_{a} \leq 2|\tP|  y_w, \forall w \in U, &\eqref{mC:ctr3}\nonumber\\
 &    \sum_{v'\in \tP: \pi(v')=v} q_{v'} = 1, \forall v\in P, &\eqref{mC:ctr4}\nonumber\\
 &     q_v \leq y_v, \forall v \in \tP, &\eqref{mC:ctr5}  \nonumber \\
 & \sum_{v'\in \tP: \pi(v')=v} y_{v'}\geq 1, \forall v \in P. &\eqref{mC:ctr5b}\nonumber\\
 &\sum_{a \in \delta^+(v)}  x_{a} = y_{v},  \forall v \in \tP, &\eqref{mC:ctr6a}\nonumber\\
  &      \sum_{a \in \delta^-(v)} x_{a}= y_{v}, \forall v \in \tP, &\eqref{mC:ctr6b}\nonumber\\
  &     \sum_{a \in \delta^+(v)}  f_{a} - \sum_{a \in \delta^-(v)}  f_{a} = q_v p_{\pi(v)}, \forall v \in \tP,&\eqref{mC:ctr7}\nonumber\\
  & f_{a} \leq \rho x_{a}, \ \ \forall a \in \tA,  &\eqref{mC:ctr8a}\nonumber\\
 &  \sum_{a \in \delta^-(w)}  f_{a} \geq \eta y_{w}, \ \ \forall  w \in U, &\eqref{mC:ctr9}\nonumber\\
     & \sum_{a \in \delta^+(w)} x_{a} -\sum_{a \in \delta^-(w)}   x_{a} = s_w-e_w, \forall w \in U,  &\eqref{mC:ctr10}\nonumber\\
 & \sum_{w \in U} e_w\leq 1,  &\eqref{mC:ctr11a}\nonumber\\
 & \sum_{w \in U} s_w\leq 1,  &\eqref{mC:ctr11b}\nonumber\\
 & \sum_{w \in U} e_w- \sum_{w \in U} s_w= 0,  &\eqref{mC:ctr11c}\nonumber\\
 & e_w + s_w \leq y_w,\; \forall w\in U, &\eqref{mC:ctr11d}\nonumber\\
 & \sum_{a \in \delta^+(S)}  x_a \geq y_v+ y_{v'}-1-\sum_{w\in S^c\cap U} s_w
\quad  \forall S\subset \tV,\; v \in S\cap \tP,\; v' \in S^c\cap\tP, &\eqref{mC:ctr12bb}\nonumber\\
& y_v \in \{0,1\}, \forall v \in \tV,&\label{V:domy}\\
& q_v \in [0,1], \forall v \in \tP,&\label{V:dompi}\\
& s_w, e_w \in \{0,1\}, \forall w \in U,&\label{V:domU}\\
& x_a \in \{0,1\}, \forall a \in \tA, &\label{V: domx}\\
& f_{a} \geq 0, \forall a \in \tA,& \label{V: domf}
\end{align}

\subsection{Special versions of the problem \label{versions}}

In what follows we detail different versions of the problem by slightly modifying the model above. Note that although our model does not impose the resulting walk to be closed (a cycle) it can be also enforced, as well as restricting the number of visits to the pick-up points or drop-off points.

Different versions of the DOBC can be derived by imposing the model the different conditions, namely, a common maximum number of visits to the drop-off points, $k_d$, a common maximum  number of visits to the pickup points, $k_p=m_v$ for all $v \in P$, or the topology of the resulting route, ${T} \in \{C,P\}$, where $C$ indicates a cycle and  $P$ a path (it could eventually be a cycle but is not explicitly required) . We denote by $(k_d,k_p)$-${\rm DOBC}\_{T}$ this family of special cases. Some interesting problems within this family are:

\begin{itemize}
    \item If $T = C$, that is, if the resulting subnetwork is required to be a cycle, it can be enforced by fixing all the variables $e_w$ and $s_w$ in the model above to take value $0$, which is equivalent to replace Constraints \eqref{mC:ctr10}-\eqref{mC:ctr11d} by:
    \begin{align}
    & \sum_{a\in \delta^+(w)} x_a  -\sum_{a\in \delta^-(w)} x_a  = 0, \forall w \in U, \label{mC:cycle1}
    \end{align}
    and Constraints \eqref{mC:ctr12bb} by:
    \begin{align}
     \sum_{a \in \delta^+(S)}  x_a \geq y_v+ y_{v'}-1, \forall S \subset \tV, v \in S\cap \tP, v' \in S^c\cap \tP. \label{mC:cycle2}
     \end{align}
     \item If $k_p=1$, a single visit to each pick-up point is allowed. Thus, the extended graph $\tG$ coincides with the original graph $G$, in particular, $\tP=P$, and neither $y_v$  nor $q_v$ variables need to be defined  for any $v\in P$, since demands are not split. Then, Constraints \eqref{mC:ctr4}, \eqref{mC:ctr5} and \eqref{mC:ctr5b} can be eliminated since they are no longer needed and   the values of the $y$ or $q$ variables can be fixed to $1$ in Constraints \eqref{mC:ctr6a},  \eqref{mC:ctr6b}, \eqref{mC:ctr7} and   \eqref{mC:ctr12bb}. In the last ones, only sets $S$ with $S\cap P \neq \emptyset$ and $S^c\cap P \neq \emptyset$ must be considered.
    \item If $k_d=1$, a single visit to each drop-off point is allowed, and then,  Constraints \eqref{mC:ctr3} must be replaced by the following constraints:
    \begin{align}
     &\sum_{a \in \delta^+(v)}  x_{a} \leq y_{w},  \forall w \in U, \label{mA:ctrk1}\\
  &      \sum_{a \in \delta^-(v)} x_{a}\leq y_{w}, \forall w \in U. \label{mA:ctrk2}
    \end{align} 
    { Note that if $\eta >0$ Constraints  \eqref{mA:ctrk1} and  \eqref{mA:ctrk2} together with Constraints \eqref{mC:ctr8a}-\eqref{mC:ctr11d} force the resulting network to be a cycle.}
\end{itemize}

\begin{ex}\label{ex:1}
We will use the same instance of Example \ref{ex:0} to show feasible (not necessarily optimal) solutions to  different versions of the problem. 

In Figure \ref{fig1} we show feasible solutions for the problems $(\infty,2)-{\rm DOBC}\_C$ (Figure \ref{cycle}), where the resulting route is forced to be a cycle, $(\infty,1)-{\rm DOBC}\_P$ (Figure \ref{1P}),  where a single visit to each pick-up point is allowed, $(1,2)-{\rm DOBC}\_P$ (Figure \ref{1U}), where a single visit to each drop-off point is allowed, and $(1,1)-{\rm DOBC}\_C$ (Figure \ref{all}),  where all the above conditions are imposed together.
\begin{figure}[h]
\centering
\subcaptionbox{$(\infty,2)$-{\rm DOBC}\_C\label{cycle}}[6.5cm]{\fbox{\begin{tikzpicture}[scale=1.1]
\coordinate (A1) at (1,6);
\coordinate (A2) at (4,6);
\coordinate (A3) at (6,7);
\coordinate (A4) at (3,4);
\coordinate (A5) at (5,1);
\coordinate (A6) at (2,2);
\coordinate (B1) at (2,5);
\coordinate (B2) at (6,4);
\coordinate (B3) at (3,1);
\coordinate (B4) at (1,1);
\coordinate (B5) at (5,3);

\node[circle,draw,inner sep=0.5pt](A-1) at (A1) {\tiny $a_1$};
\node[circle,draw,inner sep=0.5pt](A-2) at (A2) {\tiny $a_2$};
\node[circle,draw,inner sep=0.5pt](A-3) at (A3) {\tiny $a_3$};
\node[circle,draw,inner sep=0.5pt](A-4) at (A4) {\tiny $a_4$};
\node[circle,draw,inner sep=0.5pt](A-5) at (A5) {\tiny $a_5$};
\node[circle,draw,inner sep=0.5pt](A-6) at (A6) {\tiny $a_6$};
\node[rectangle,draw, fill=gray!10, inner sep=1.5pt](B-1) at (B1) {\tiny $b_1$};
\node[rectangle,draw, fill=gray!10, inner sep=1.5pt](B-2) at (B2) {\tiny $b_2$};
\node[rectangle,draw, fill=gray!10, inner sep=1.5pt](B-3) at (B3) {\tiny $b_3$};
\node[rectangle,draw, inner sep=1pt](B-4) at (B4) {\tiny $b_4$};
\node[rectangle,draw, inner sep=1pt](B-5) at (B5) {\tiny $b_5$};

\draw[->, arrows={-Stealth}] (B-1)--(A-1);
\draw[->, arrows={-Stealth}] (A-1)--(A-2);
\draw[->, arrows={-Stealth}] (A-2)--(B-2);
\draw[->, arrows={-Stealth}] (B-2)--(A-3);
\draw[->, arrows={-Stealth}] (A-3)--(A-2);
\draw[->, arrows={-Stealth}] (A-2)--(A-4);
\draw[->, arrows={-Stealth}] (A-4)--(B-2);
\draw[->, arrows={-Stealth}] (B-2)--(A-5);
\draw[->, arrows={-Stealth}] (A-5)--(B-3);
\draw[->, arrows={-Stealth}] (B-3)--(A-6);
\draw[->, arrows={-Stealth}] (A-6)--(A-4);
\draw[->, arrows={-Stealth}] (A-4)--(B-1);
\end{tikzpicture}}}
\subcaptionbox{$(\infty,1)$-{\rm DOBC}\_P\label{1P}}[6.5cm]{\fbox{\begin{tikzpicture}[scale=1.1]
\coordinate (A1) at (1,6);
\coordinate (A2) at (4,6);
\coordinate (A3) at (6,7);
\coordinate (A4) at (3,4);
\coordinate (A5) at (5,1);
\coordinate (A6) at (2,2);
\coordinate (B1) at (2,5);
\coordinate (B2) at (6,4);
\coordinate (B3) at (3,1);
\coordinate (B4) at (1,1);
\coordinate (B5) at (5,3);

\node[circle,draw,inner sep=0.5pt](A-1) at (A1) {\tiny $a_1$};
\node[circle,draw,inner sep=0.5pt](A-2) at (A2) {\tiny $a_2$};
\node[circle,draw,inner sep=0.5pt](A-3) at (A3) {\tiny $a_3$};
\node[circle,draw,inner sep=0.5pt](A-4) at (A4) {\tiny $a_4$};
\node[circle,draw,inner sep=0.5pt](A-5) at (A5) {\tiny $a_5$};
\node[circle,draw,inner sep=0.5pt](A-6) at (A6) {\tiny $a_6$};
\node[rectangle,draw, fill=gray!10, inner sep=1.5pt](B-1) at (B1) {\tiny $b_1$};
\node[rectangle,draw, fill=gray!10, inner sep=1.5pt](B-2) at (B2) {\tiny $b_2$};
\node[rectangle,draw, fill=gray!10, inner sep=1.5pt](B-3) at (B3) {\tiny $b_3$};
\node[rectangle,draw, inner sep=1pt](B-4) at (B4) {\tiny $b_4$};
\node[rectangle,draw, inner sep=1pt](B-5) at (B5) {\tiny $b_5$};

\draw[->, arrows={-Stealth}] (A-1)--(B-1);
\draw[->, arrows={-Stealth}] (A-2)--(A-1);
\draw[->, arrows={-Stealth}] (B-2)--(A-2);
\draw[->, arrows={-Stealth}] (A-3) to[out=220,in=110] (B-2);
\draw[->, arrows={-Stealth}] (B-2) -- (A-3);
\draw[->, arrows={-Stealth}] (A-5)--(B-2);
\draw[->, arrows={-Stealth}] (B-3)--(A-5);
\draw[->, arrows={-Stealth}] (B-3)--(A-6);
\draw[->, arrows={-Stealth}] (A-6)--(A-4);
\draw[->, arrows={-Stealth}] (A-4)--(B-3);
\end{tikzpicture}}}\\
\subcaptionbox{$(1,2)$-{\rm DOBC}\_P\label{1U}}[6.5cm]{\fbox{\begin{tikzpicture}[scale=1.1]
\coordinate (A1) at (1,6);
\coordinate (A2) at (4,6);
\coordinate (A3) at (6,7);
\coordinate (A4) at (3,4);
\coordinate (A5) at (5,1);
\coordinate (A6) at (2,2);
\coordinate (B1) at (2,5);
\coordinate (B2) at (6,4);
\coordinate (B3) at (3,1);
\coordinate (B4) at (1,1);
\coordinate (B5) at (5,3);

\node[circle,draw,inner sep=0.5pt](A-1) at (A1) {\tiny $a_1$};
\node[circle,draw,inner sep=0.5pt](A-2) at (A2) {\tiny $a_2$};
\node[circle,draw,inner sep=0.5pt](A-3) at (A3) {\tiny $a_3$};
\node[circle,draw,inner sep=0.5pt](A-4) at (A4) {\tiny $a_4$};
\node[circle,draw,inner sep=0.5pt](A-5) at (A5) {\tiny $a_5$};
\node[circle,draw,inner sep=0.5pt](A-6) at (A6) {\tiny $a_6$};
\node[rectangle,draw, fill=gray!10, inner sep=1.5pt](B-1) at (B1) {\tiny $b_1$};
\node[rectangle,draw, fill=gray!10, inner sep=1.5pt](B-2) at (B2) {\tiny $b_2$};
\node[rectangle,draw, fill=gray!10, inner sep=1.5pt](B-3) at (B3) {\tiny $b_3$};
\node[rectangle,draw, inner sep=1pt](B-4) at (B4) {\tiny $b_4$};
\node[rectangle,draw, inner sep=1pt](B-5) at (B5) {\tiny $b_5$};

\draw[->, arrows={-Stealth}] (B-1)--(A-1);
\draw[->, arrows={-Stealth}] (A-1)--(A-2);
\draw[->, arrows={-Stealth}] (A-2)--(A-3);
\draw[->, arrows={-Stealth}] (A-3) -- (B-2);
\draw[->, arrows={-Stealth}] (B-2)--(A-4);
\draw[->, arrows={-Stealth}] (A-4)--(A-5);
\draw[->, arrows={-Stealth}] (A-5)--(B-3);
\draw[->, arrows={-Stealth}] (B-3)--(A-6);
\draw[->, arrows={-Stealth}] (A-6)--(A-4);
\draw[->, arrows={-Stealth}] (A-4)--(B-1);
\end{tikzpicture}}}
\subcaptionbox{$(1,1)$-{\rm DOBC}\_C\label{all}}[6.5cm]{\fbox{\begin{tikzpicture}[scale=1.1]
\coordinate (A1) at (1,6);
\coordinate (A2) at (4,6);
\coordinate (A3) at (6,7);
\coordinate (A4) at (3,4);
\coordinate (A5) at (5,1);
\coordinate (A6) at (2,2);
\coordinate (B1) at (2,5);
\coordinate (B2) at (6,4);
\coordinate (B3) at (3,1);
\coordinate (B4) at (1,1);
\coordinate (B5) at (5,3);

\node[circle,draw,inner sep=0.5pt](A-1) at (A1) {\tiny $a_1$};
\node[circle,draw,inner sep=0.5pt](A-2) at (A2) {\tiny $a_2$};
\node[circle,draw,inner sep=0.5pt](A-3) at (A3) {\tiny $a_3$};
\node[circle,draw,inner sep=0.5pt](A-4) at (A4) {\tiny $a_4$};
\node[circle,draw,inner sep=0.5pt](A-5) at (A5) {\tiny $a_5$};
\node[circle,draw,inner sep=0.5pt](A-6) at (A6) {\tiny $a_6$};
\node[rectangle,draw, fill=gray!10, inner sep=1.5pt](B-1) at (B1) {\tiny $b_1$};
\node[rectangle,draw, fill=gray!10, inner sep=1.5pt](B-2) at (B2) {\tiny $b_2$};
\node[rectangle,draw, fill=gray!10, inner sep=1.5pt](B-3) at (B3) {\tiny $b_3$};
\node[rectangle,draw, inner sep=1pt](B-4) at (B4) {\tiny $b_4$};
\node[rectangle,draw, inner sep=1pt](B-5) at (B5) {\tiny $b_5$};

\draw[->, arrows={-Stealth}] (B-1)--(A-1);
\draw[->, arrows={-Stealth}] (A-1)--(A-2);
\draw[->, arrows={-Stealth}] (A-2)--(A-3);
\draw[->, arrows={-Stealth}] (A-3) -- (B-2);
\draw[->, arrows={-Stealth}] (B-2)--(A-4);
\draw[->, arrows={-Stealth}] (A-4)--(A-5);
\draw[->, arrows={-Stealth}] (A-5)--(B-3);
\draw[->, arrows={-Stealth}] (B-3)--(A-6);
\draw[->, arrows={-Stealth}] (A-6)--(B-1);
\end{tikzpicture}}}
\caption{Feasible solutions (non necessarily optimal) the different versions of the problem. \label{fig1}}
\end{figure}
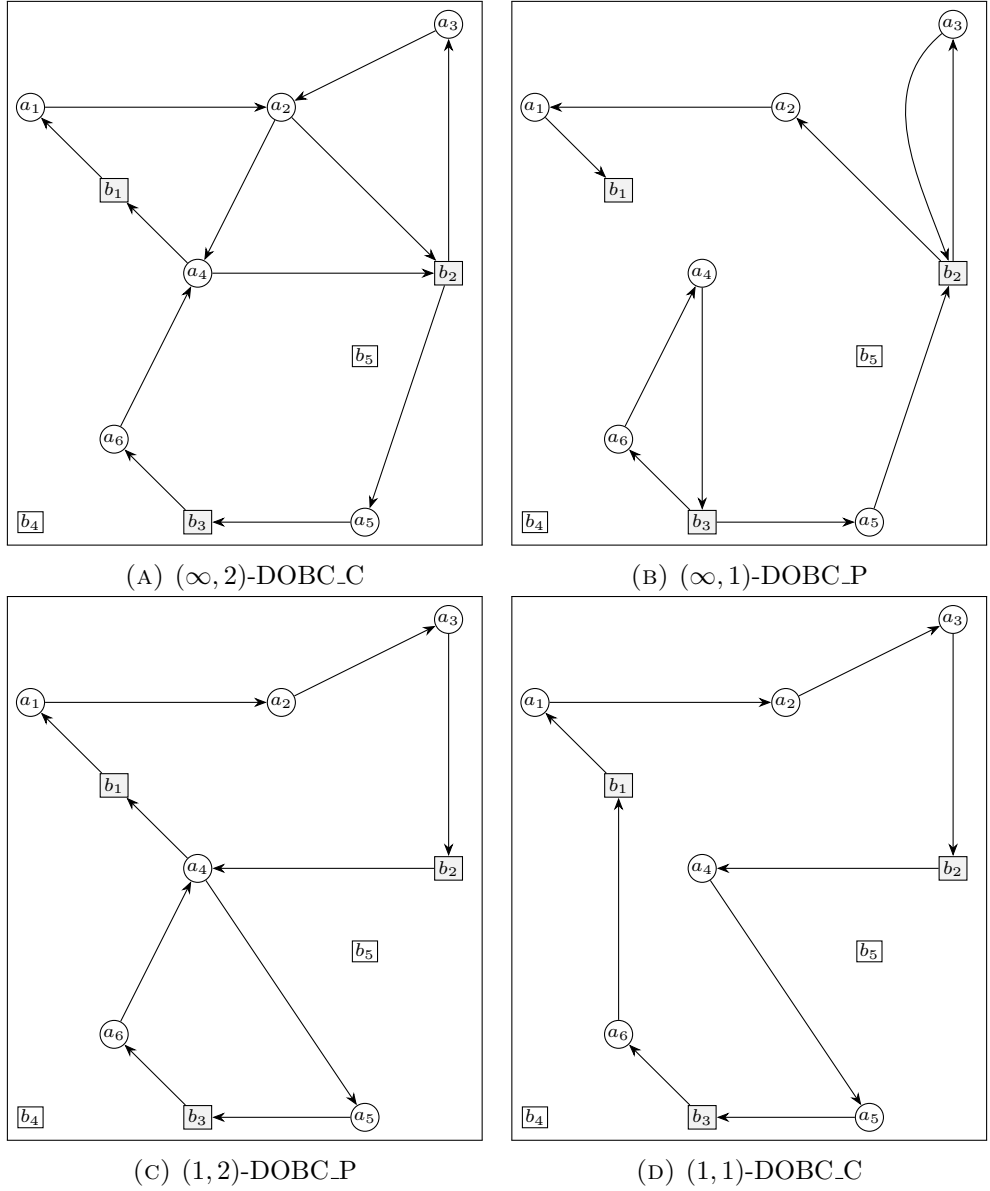

Specifically, the walks of the vehicle in the different solutions are:
\begin{description}
    \item[\small$(\infty,2)$-{\rm DOBC\_C}: ] {\small
$\squared{$b_3$} \rightarrow \stackrel{\text{\tiny 50}}{\circled{$a_6$}} \stackrel{50}{\longrightarrow} \stackrel{\text{\tiny 25}}{\circled{$a_4$}} \stackrel{75}{\longrightarrow} \squared{$b_1$}  \rightarrow \stackrel{\text{\tiny 50}}{\circled{$a_1$}}  \rightarrow \stackrel{\text{\tiny 25}}{\circled{$a_2$}} \stackrel{75}{\longrightarrow} \squared{$b_2$} \rightarrow \stackrel{\text{\tiny 50}}{\circled{$a_3$}} \stackrel{50}{\longrightarrow} \stackrel{\text{\tiny 25}}{\circled{$a_2$}} \stackrel{75}{\longrightarrow} \stackrel{\text{\tiny 25}}{\circled{$a_4$}} \stackrel{100}{\longrightarrow}  \squared{$b_2$}  \rightarrow \stackrel{\text{\tiny 50}}{\circled{$a_5$}} \stackrel{50}{\longrightarrow}  \squared{$b_3$}
$}  
    \item[\small$(\infty,1)$-{\rm DOBC\_P}: ] {\small
$\squared{$b_3$} \rightarrow \stackrel{\text{\tiny 50}}{\circled{$a_6$}} \stackrel{50}{\longrightarrow} \stackrel{\text{\tiny 50}}{\circled{$a_4$}} \stackrel{100}{\longrightarrow} \squared{$b_3$}  \rightarrow \stackrel{\text{\tiny 50}}{\circled{$a_5$}}  \stackrel{50}{\longrightarrow} \squared{$b_2$} \rightarrow \stackrel{\text{\tiny 50}}{\circled{$a_3$}} \stackrel{50}{\longrightarrow} \squared{$b_2$} \rightarrow \stackrel{\text{\tiny 50}}{\circled{$a_2$}} \stackrel{50}{\longrightarrow} \stackrel{\text{\tiny 50}}{\circled{$a_1$}} \stackrel{100}{\longrightarrow} \squared{$b_1$}
$}  
    \item[\small$(1,2)$-{\rm DOBC\_P}: ] {\small
$
\squared{$b_1$} \longrightarrow \stackrel{\text{\tiny 50}}{\circled{$a_1$}} \stackrel{50}{\longrightarrow} \stackrel{\text{\tiny 50}}{\circled{$a_2$}} \stackrel{100}{\longrightarrow}  \stackrel{\text{\tiny 50}}{\circled{$a_3$}}  \stackrel{150}{\longrightarrow} \squared{$b_2$} \longrightarrow \stackrel{\text{\tiny 25}}{\circled{$a_4$}} \stackrel{25}{\longrightarrow} \stackrel{\text{\tiny 50}}{\circled{$a_5$}} \stackrel{75}{\longrightarrow} \squared{$b_3$} \longrightarrow \stackrel{\text{\tiny 50}}{\circled{$a_6$}} \stackrel{50}{\longrightarrow} \stackrel{\text{\tiny 25}}{\circled{$a_4$}} \stackrel{75}{\longrightarrow} \squared{$b_1$}
$}
    \item[\small$(1,1)$-{\rm DOBC\_C}: ] {\small
$
\squared{$b_1$} \longrightarrow \stackrel{\text{\tiny 50}}{\circled{$a_1$}} \stackrel{50}{\longrightarrow} \stackrel{\text{\tiny 50}}{\circled{$a_2$}} \stackrel{100}{\longrightarrow}  \stackrel{\text{\tiny 50}}{\circled{$a_3$}}  \stackrel{150}{\longrightarrow} \squared{$b_2$} \longrightarrow \stackrel{\text{\tiny 50}}{\circled{$a_4$}} \stackrel{50}{\longrightarrow} \stackrel{\text{\tiny 50}}{\circled{$a_5$}} \stackrel{100}{\longrightarrow} \squared{$b_3$} \longrightarrow \stackrel{\text{\tiny 50}}{\circled{$a_6$}} \stackrel{50}{\longrightarrow}  \squared{$b_1$}
$}
    
\end{description}
Note that the subnetwork obtained for problem $(1,2)$-{\rm DOBC}\_P is a cycle, even though we do not imposed it. Note also that all the solutions are feasible for a vehicle capacity $\rho=150$, but in case the capacity were  smaller, the solutions for $(1,2)$-{\rm DOBC}\_P and $(1,1)$-{\rm DOBC}\_C would no longer be feasible, since the vehicle load is $150$ in some arcs.
\end{ex}
 
\subsection{Symmetry breaking constraints}

Note that the problem exhibits symmetry in the set of visits of the same pickup point, and one can obtain an equivalent solution by re-sorting them in any order. To avoid this situation which may be inconvenient for any implicit enumeration solution algorithm, we impose the following constraints:
\begin{align} 
y_{v_1}&=1\\
 y_{v_{i+1}}& \leq y_{v_i} \quad \forall i=1,\cdots ,m_{v-1}\\
 q_{v_{i+1}}& \leq q_{v_i} \quad \forall i=1,\cdots ,m_{v-1}
 \end{align}
 That is, the first replica of vertex $v\in P$, $v_1$, is always activated, and one cannot use the $(i+1)$-th replica unless the $i$-th replica is also used. Since replicas can still be resorted, we assign the label $i$ to the replica with $i$-th largest proportion of demand picked up in that visit.

\subsection{A two-phase Branch-and-Cut approach\label{S:branch-cut}}

As already mentioned, in our model, the size of the family of Constraints \eqref{mC:ctr12bb} 
is exponential in the number of vertices of $\tG$. 
It is thus not possible to solve the formulation  directly with some off-the-shelf solver, even for medium size instances. In  what follows we present an exact branch-and-cut algorithm for this formulation in which this complex set of constraints is initially relaxed and the constraints are incorporated to the model as needed in the solution procedure. The strategy that we describe below is embedded within an enumeration tree and it is applied not only at the root node but also at all explored nodes.

Our separation procedure is an adaptation of the separation procedure for classical connectivity constraints~\citep{PG-1985} that has been successfully applied in other types of network design problems  \citep[see, e.g][]{hubsVEY2023}.

Initially, constraints \eqref{mC:ctr12bb} are not incorporated to the model. Some of them will be incorporated as needed, until we are sure that connectivity of the resulting network is verified. 
 The goal of the separation approach is to find $S \subset \tV$ such that \eqref{mC:ctr12bb} is violated, that is:
$$
\sum_{a \in \delta^+(S)} x_a < y_v+ y_{v'}-1-\sum_{w\in S^c\cap U} s_w,
$$
for some $v \in S \cap \tP$, $v'\in S^c \cap \tP$.

Let $\bar y, \bar x, \bar s$ be the (possibly fractional) solution values of the linear relaxation of the problem in the design variables involved in the connectivity constraint. To separate the connectivity constraints, we proceed in two phases:
\begin{itemize}
    \item {\bf Phase I}: To avoid overloading the model with constraints constructed over the entire extended graph, we first separate a relaxed version of the connectivity constraints, where all the replicas of the original nodes are aggregated, that is:
    \begin{align}
\sum_{a \in \delta^+(\tilde{S}): \atop \tilde{S}\subset \tilde{V}, \pi(\tilde{S})=S}  x_a + \sum_{w\in S^c\cap U} s_w\geq 1, 
\quad  \forall S\subset V,\; S\cap P\neq \emptyset,\; S^c\cap P \neq \emptyset.\label{mC:connectivity_relax}
\end{align}
Note that these constraints result from aggregating the constraints \eqref{mC:ctr12bb} by all the visits (replicas) of the arcs emanating from the sets $S$ and $S^c$. It enforces that from every subset of the original node set $V$, whose complement contains no starting drop-off point for the walk, at least one arc must emanate from one of the replicas of the nodes in that subset.

To this end, we construct the undirected graph $\bar G$ with nodes and arcs:
\begin{align*}
\bar V &= \{v \in V: \exists \tilde{v} \in \tV \text{ with } \pi(\tilde{v})=v \text{ and } \bar y_{\tilde{v}}>0\},\\
\bar A &= \{a=(v,w) \in A: \exists \tilde{a}=(\tilde{v}, \tilde{w}) \in \tA \text{ with } \pi(\tilde{v})=v, \pi(\tilde{w})=w, \text{ and } \bar x_{\tilde{a}}>0\}.
\end{align*}
We assign assign to arc $a\in \bar A$ the capacity $\displaystyle\max_{\tilde{a}=(\tilde{v}, \tilde{w}) \in \tA: \atop \pi(\tilde{v})=v, \pi(\tilde{w})=w} \bar x_{\tilde{a}}$, that is, the maximum of the (possibly fractional) $x$-values of all arcs connecting any of the replicas associated with the endpoints of arc $a$.

To look for the  set $\bar S$ associated with a \emph{violated} inequality in the shape of \eqref{mC:connectivity_relax}, we construct a tree of min-cuts for $\bar G$ (with the above mentioned capacities), that we denote by $\bar\T$ and that can be computed using the procedure proposed in \citep{gusfield1990}. From $\bar \T$, for each min-cut, $\emptyset \subsetneqq \bar S \subsetneqq \bar V$, with $\bar S \cap P, \bar S^c \cap P \neq \emptyset$ we compute:
$$
\nu_{\bar S} = \sum_{a \in \delta^+(\bar S) }  \bar x_a + \sum_{w \in \bar S^c \cap U} s_w - 1.
$$

In case $\nu_{\bar S} <0$, the inequality \eqref{mC:ctr12bb} associated with $\bar S$, and any $v\in \bar S \cap P$, and $v' \in \bar S^c \cap P$ is violated and is incorporated to the pool of constraints, in the form \eqref{mC:connectivity_relax}.

If $\nu_{\bar S} \geqslant 0$ for all considered min-cuts, the solution does not violate any of  the relaxed connectivity constraints for sets $S$ that either contain all replicas of a pick-up point or do not contain any of them.

\item {\bf Phase II: } The constraints added in the previous phase might be satisfied, however, the connectivity of the resulting network is not guaranteed, since the subgraph is constructed in the expanded graph rather than in the original graph $G$. Therefore, if the first phase does not result in adding any new cut to the pool of constraints, we proceed to properly separate the constraints in \eqref{mC:ctr12bb}. This strategy helps to avoid adding dense constraints to the pool, as this phase of the procedure, according to our experiments, needs to be called only a relatively small number of times.

In this phase, we construct the undirected graph $\hat{G}$ with nodes and arcs:
$$
\hat{V} = \{v \in \tV:  \bar{y}_{v}>0\}, \quad \hat{A}  = \{a \in \tA:  \bar x_{a}>0\}.
$$
Now, we assign assign to arc $a\in \hat A$ the capacity $\bar x_a$.

As in Phase I, we construct a tree of min-cuts for $\hat G$, $\hat \T$. From $\hat \T$,  for each min-cut, $\emptyset \subsetneqq \hat S \subsetneqq \hat V$, we select $v\in \hat S\cap \tP$, $v'\in \hat S^c \cap \tP$ maximizing the value:
$$
\mu_{\hat S, v, v'} = \sum_{a \in \delta^+(\hat S)}  \bar x_a - \bar y_v - \bar y_{v'}+ 1 + \sum_{w \in \hat S^c \cap U} s_w
$$
Then,  if $\mu_{\hat S, v, v'} <0$, the inequality \eqref{mC:ctr12bb} associated with $\hat S$, $v$, and $v'$ is violated and is incorporated to the pool of constraints. Otherwise, the solution does not violate the connectivity constraint.
\end{itemize}

\section{Computational Study}\label{sec:comput}

In this section we report the results of the computational experience that we conducted to validate our proposal and determine the impact of the different parameters in our approaches.

We have generated several instances following a structure similar to the benchmark instances for pick-up and delivery problems~\citep{mosheiov1994travelling}. Let $n=|P|$  be the number of pick-up points and $m=|U|$ the number of drop-off points. For each $n \in \{5, 10, 20, 50\}$  and $m \in\{5, 10, 20\}$ (with $m\leq n$)  we generate $2$-dimensional coordinates for the pick-up and drop-off points in $[-500,500]\times[-500,500]$, and we compute  the cost for traversing each arc linking these points, $C_a$, as the $\ell_1$-distance between the two end-nodes of the arc. In our computational study we always assume that drop-off points have the same set-up cost and therefore, in our model Constraints  \eqref{mC:ctr1} are replaced by Constraints  \eqref{mC:ctr2}. Then, we  consider $p\in\{2, 5, 10, 20\}$ (with $p\leq m$).

For each of the pick-up points, we generate random integer demands, $p_v$, in $[10,100]$. We consider three different capacities for the vehicle, which depend on the  instance, namely $\rho \in \{ 0.25\times (\max_{v\in P} p_v +\sum_{v\in P}p_v),\; 0.5\times (\max_{v\in P} p_v+\sum_{v\in P}p_v),\; \sum_{v\in V}p_v\} $, that we called \texttt{small}, \texttt{medium}, and \texttt{large} capacity, respectively, and we compute the transportation cost per unit flow associated to each arc, $a\in A$, as $C_a'=\frac{C_{a}}{\rho}$. Three different values of $\alpha$ were also considered: $0,\; 0.5$, and $1$,  and the value of $\eta$ is set to $1$ (to assure at least one unit of demand received at each drop-off point to activate it). 

For each of these instances, we solved seven different variants of the model: $(1,1)$-{\rm DOBC}\_C, $(\infty,k)$-{\rm DOBC}\_C, and $(\infty,k)$-{\rm DOBC}\_P, $k\in \{1,2,3\}$. 

In Table \ref{tab:summary_experiments} we summarize the parameters used in our experiments.
\begin{table}[h]
\begin{center}\begin{tabular}{rl}
Parameter & Values\\\hline
$n = |P|$ & $\{5,10,20,50\}$\\
$m= |U|$ & $\{5,10,20\}$ ($m\leq n$)\\
$C_a$ & $\ell_1$-distance between random points in $[-500,500]\times [-500,500]$\\
$C_a'$ & $\frac{C_{a}}{\rho}$\\
$p$ & $\{2,5,10,20\}$\\
$p_v$ & Random integers in $[10,100]$\\
$\rho$ & \texttt{small}: $0.25\times (\max_{v\in P} p_v +\sum_{v\in P}p_v)$\\
& \texttt{medium}: $0.5\times (\max_{v\in P} p_v+\sum_{v\in P}p_v)$\\
& \texttt{large}: $ \sum_{v\in V}p_v\}$\\
$\alpha$ & $\{0, 0.5, 1\}$\\
$\eta$ & 1\\
$(k_d,k_p)$ & $\{(1,1), (\infty,1), (\infty,2), (\infty,3)$\\
T & $\{C,P\}$\\\hline
\end{tabular}
\caption{Parameters and models considered in our experiments.\label{tab:summary_experiments}}
\end{center}
\end{table}
All instances were solved with the Gurobi 11.0.2 optimizer, under a Windows 10 environment on an 11th Gen Intel(R)  Core(TM) i7-11700K @ 3.60 GHz 3.60 GHz processor and 64 GB of RAM. Default values were used for the parameters of Gurobi solver except for TimeLimit, MIPGap, Threads, Heuristics, Cuts and, MIPFocus fixed to 7200 seconds, 0.005, 1, 0, 0 and, 2, respectively.

\subsection{Results for the small-size instances}

In order to determine the impact of the different parameters in our approaches we start the computational study by considering small-size instances where $n \in \{5, 10, 20\}$ with $m \in \{5, 10\}$ ($m \leq n$) and $p\in\{2, 5, 10\}$ ($p\leq m$). For each combination of $(n,\, m,\, p)$ we consider five different instances and, as mentioned before, for each one   we consider three values for parameters $\rho$ and $\alpha$  and we solve the seven variants of the model.  Thus, we have solved a total of 3780 small-size instances. In Table \ref{tabla:1} and Figures \ref{fig:8}--\ref{fig:3} we summarize the average results for instances with the same size and characteristics. 

First, in Table \ref{tabla:1}, we report the average results for these instances, aggregating by the values of $n$, $m$, $p$, and for the different types of formulations. We indicate in this table the percent number of infeasible instances ({\bf Inf}), percent number of instances that reached the time limit of two hours ({\bf TL}), and, in that case the average MIP Gap after this time ({\bf GAP}). As we can see, infeasible instances only appear when using models $(1,1)$-DOBC\_C and $(\infty,1)$-DOBC\_T, both for T=P and T=C, and the number of these instances is considerably higher for model $(1,1)$-DOBC\_C. These are the more restrictive models where a single visit to both pick-up  and drop-off points  for model $(1,1)$-DOBC\_C  and a single visit to pick-up points for models $(\infty,1)$-DOBC\_T  is allowed. Infeasibility no longer happens when at least two visits to pick-up points and as many visits as needed to drop-off points are allowed (models $(\infty,k)$-DOBC\_T, both for T=P and T=C, with $k\geq 2$). Conversely,  the number of instances reaching the time limit increases with the number of visits allowed to the pick-up points. This is due to the increase in the size of the problem as a result of the number of replicas/allowed visits. The MIP Gap in these cases varies between $2\%$ and $7\%$. As can be seen, there is also a significant number of instances that reach the time limit when using the more restrictive model where only one visit to both, the pick-up and the drop-off points is allowed (model $(1,1)$-DOBC\_C) and in this case the MIP Gap reaches $36\%$.

\begin{table}
\centering
{\small
\begin{tabular}{| p{0.15cm}  p{0.15cm}  p{0.2cm} | c | c | c | c | c | c | c | c | c | c | c | c | c |}
\hline
 &  &  & \multicolumn{3}{c}{} &  \multicolumn{10}{|c|}{$(\infty,k)$-{\rm DOBC}\_T}\\
 \cline{7-16}
  &  &  &  \multicolumn{3}{c}{$(1,1)$-{\rm DOBC}\_C}  & \multicolumn{2}{|c|}{$k=1$} &  \multicolumn{4}{c|}{$k=2$} &  \multicolumn{4}{c|}{$k=3$}\\
  \cline{7-16}
    &  &  &  \multicolumn{3}{c|}{} & T=P & T=C &  \multicolumn{2}{c|}{T=P} &   \multicolumn{2}{c|}{T=C} &  \multicolumn{2}{c|}{T=P} &   \multicolumn{2}{c|}{T=C} \\
\hline
$n$ & $m$ & $p$ & \texttt{Inf} & \texttt{TL} & \texttt{GAP} & \texttt{Inf} & \texttt{Inf} & \texttt{TL} & \texttt{GAP} & \texttt{TL} & \texttt{GAP} & \texttt{TL} & \texttt{GAP} & \texttt{TL} & \texttt{GAP} \\
\hline
5 & 5& 2 & 33.33 & - & - & 6.67 & 6.67 & - & - & - &-  & - & - & - & - \\
 &  & 5 & 6.67 & - & - & 6.67 & 6.67 & - & -  & - & - & - & - & - & - \\
\hline
10& 5 & 2 & 33.33 & - & - & - & - & - & - & - & - & - & - & 2.22 & 1.88 \\
 &  & 5 & - & - & - & - & - & - & - & - & - & - & - &  -& - \\
\cline{2-16}
 & 10 & 2 & 33.33 & - & - & - & - & - & - & - & - & - &-  & - & - \\
 &  & 5 & - &-  & - & - & - &  -& - & - & - &-  & - & - & - \\
 &  & 10 & - & - & - & - & - & - & - & - & - &-  & - &-  & - \\
\hline
20 & 5 & 2 & 33.33 & 8.89 & 11.00 & - & - & 4.44 & 3.36 & 4.44 & 3.39 & 26.67 & 6.84 & 20.00 & 4.56 \\
 &  & 5 & - & - & - & - & - & - & - & - & - & 4.44 & 3.09 & - &-  \\
\cline{2-16}
 & 10 & 2 & 33.33 & 20.00 & 36.34 & - &  -& 15.56 & 3.80 & 2.22 & 6.60 & 33.33 & 7.08 & 26.67 & 4.33 \\
 &  & 5 & - & - & - &  -& - & - & - & - & - & 8.89 & 3.47 & 4.44 & 4.09 \\
 & & 10 & - & - & - &  -& - & - & - & - & - & 6.67 & 2.07 & 2.22 & 3.30 \\
\hline\hline
\multicolumn{3}{|c|}{\textbf{Total}} & {14.44} & {2.41} & - & {1.11} & {1.11} & {1.67} & - & {0.56} & - & {6.67} & {5.96} & {4.63} & {4.26} \\
\hline

\end{tabular}}

\caption{Percentage of infeasible instances, and percentage of instances that reached the time limit and their MIP Gap.\label{tabla:1}}
\end{table}

Regarding the instances that are feasible for all compared models, first, we note that the size of the instance ($n$) has a significative impact in the computational requirement of the approaches, as can be seen both in the number of instances that reached the time limit and in the performance profiles  that we draw in Figure \ref{fig:8}. Comparing the more restrictive model,  $(1,1)$-DOBC\_C, with model $(\infty,1)$-DOBC\_C,  where one visit is allowed for the pick-up points, but multiple visits are allowed for the drop-off points, we have observed that the first one is computationally more difficult, both in the number of instances that overcome the maximum MIPGap allowed of $0.5\%$ after the time limit, and in the CPU times required to solve the feasible instances (for all those represented in the plot), as can be seen in the performance profiles (with CPU time in logarithmic scale for ease of comparison) drawn in Figure \ref{fig:1} (left).

\begin{figure}[]
\begin{center}
\includegraphics[width=0.5\textwidth]{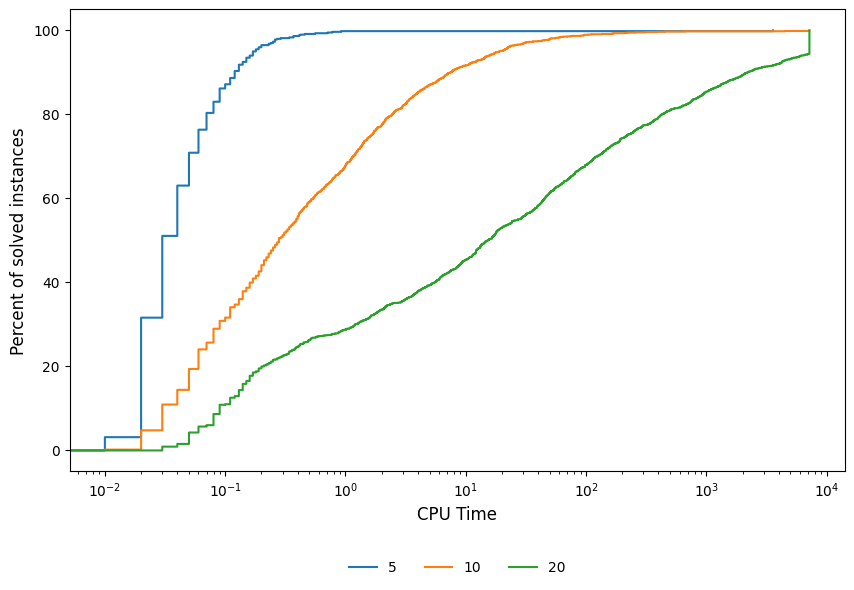}
\caption{Performance profile by the instance size ($n$),  with CPU time in log scale}. \label{fig:8}
\end{center}
\end{figure}

\begin{figure}[h]
\begin{center}
\includegraphics[width=0.5\textwidth]{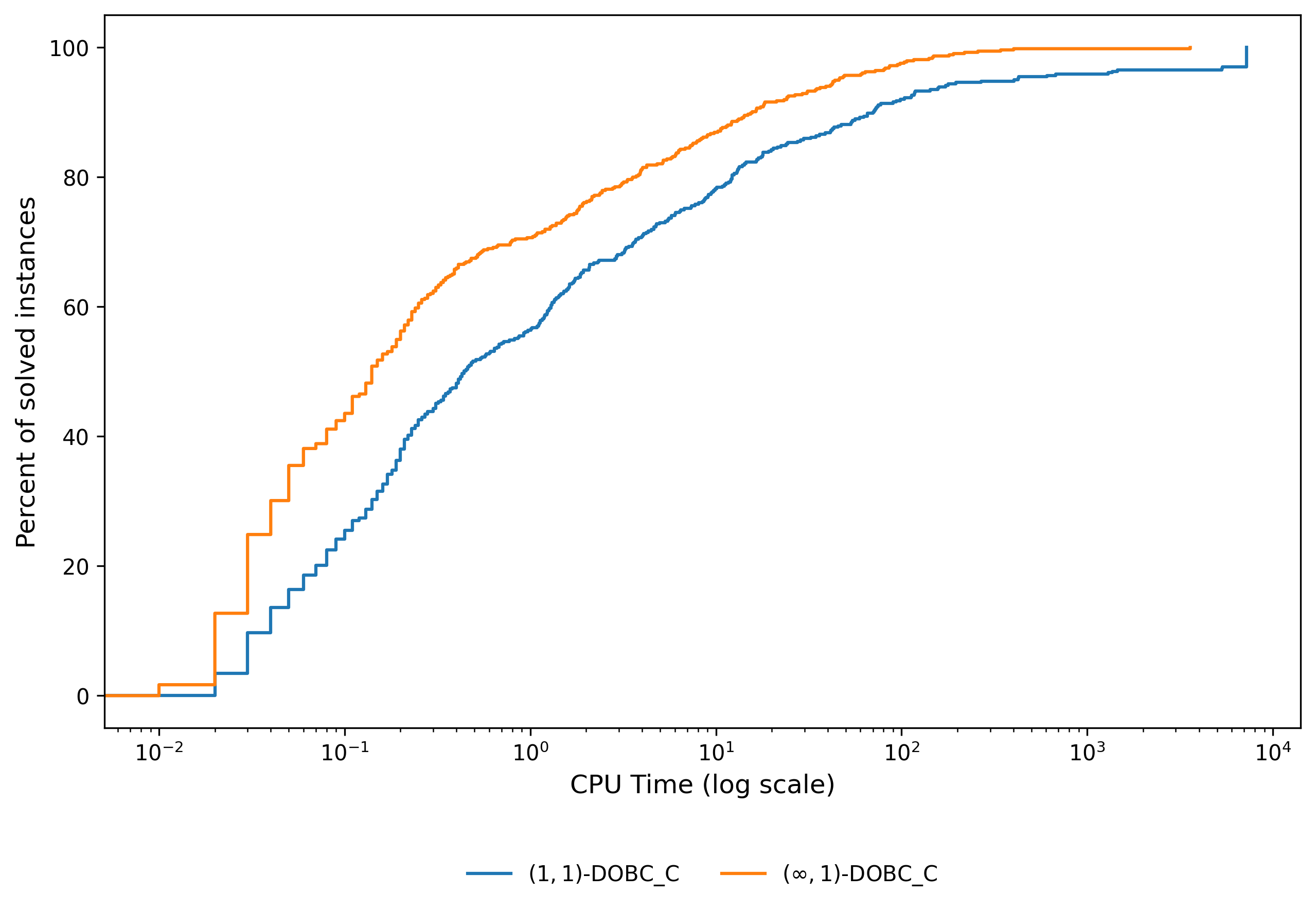}~\includegraphics[width=0.5\textwidth]{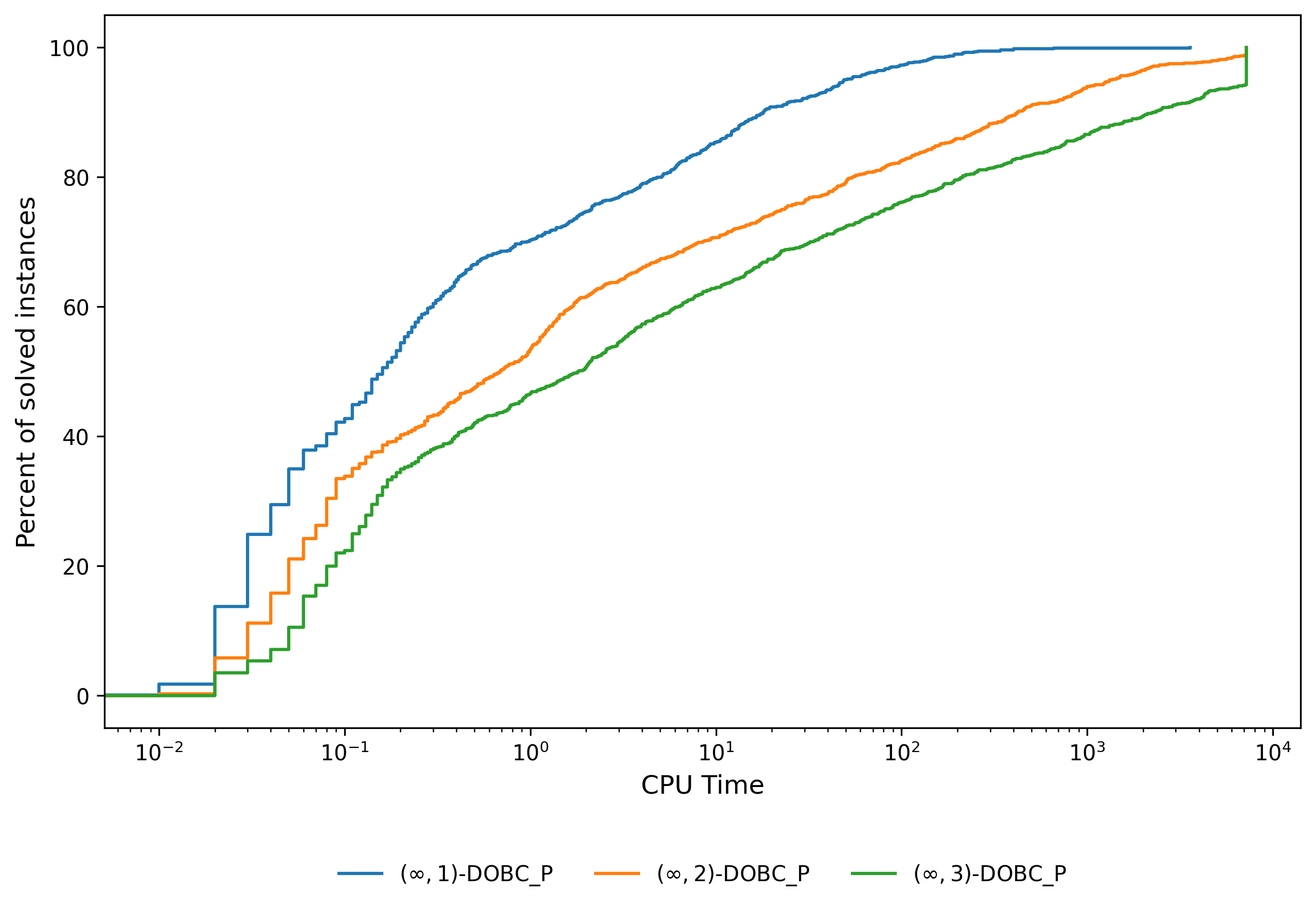}
\caption{Performance profiles of the $(1,1)$ -DOBC\_C and $(\infty,1)$-DOBC\_C models for feasible instances (left) and { $(\infty,k)$-DOBC\_P} for $k \in \{1,2,3\}$ (right).  CPU time in log scale.\label{fig:1}}
\end{center}
\end{figure}

\begin{figure}[h]
\begin{center}
\includegraphics[width=0.5\textwidth]{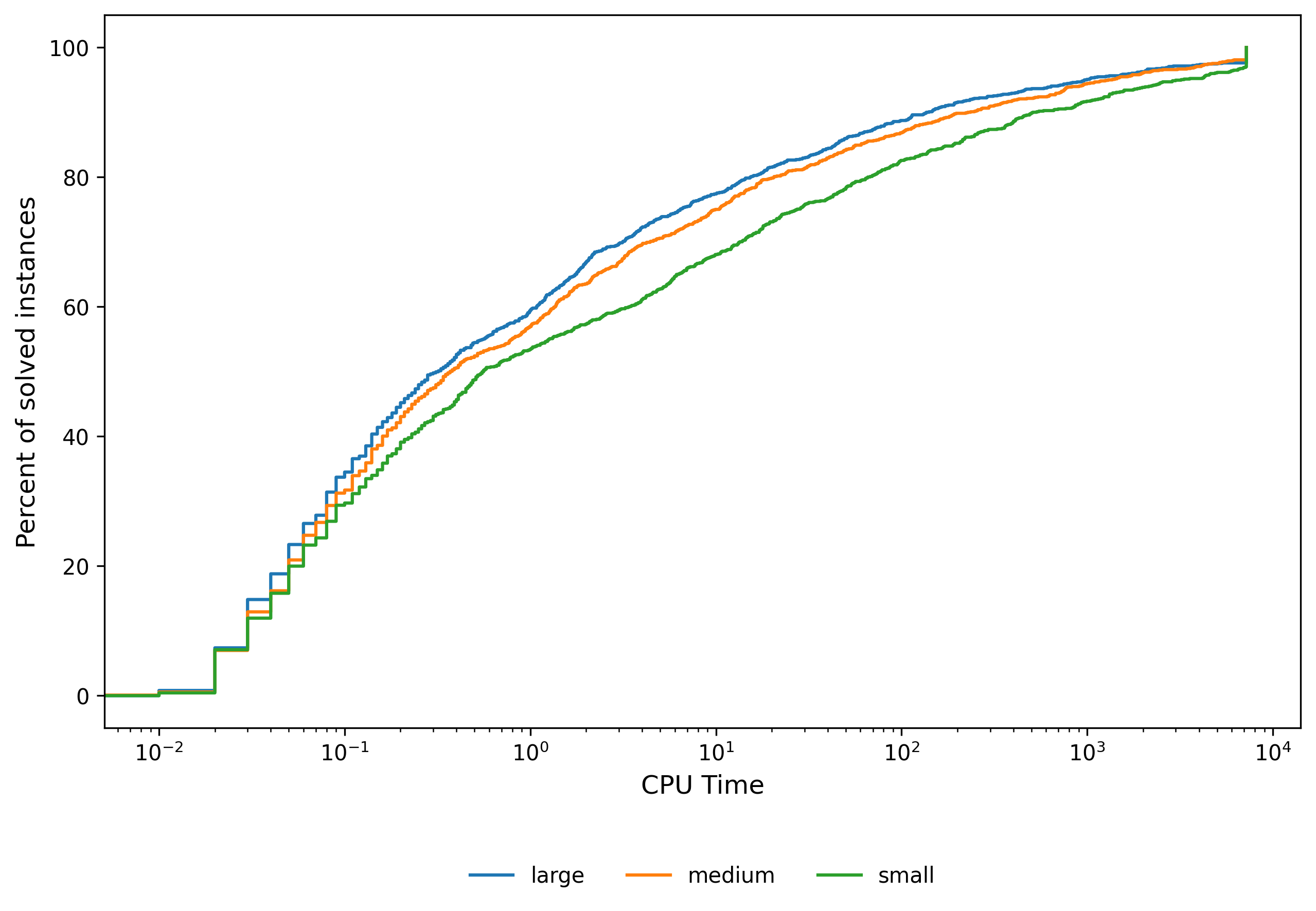}
\caption{Performance profile by the vehicles capacities ($\rho$ values).  CPU time in log scale.\label{fig:5}}
\end{center}
\end{figure}

Comparing models $(\infty,k)$-DOBC\_P, we can observe that the larger is the value of $k$ the more  challenging are problems, and the  time required to solve the instances, as well as the number of instances that reached the time limit increases considerably. This can be seen in Figure \ref{fig:1} (right), which  compares the time required to solve the feasible instances depending on the maximum number of allowed visits to the pick-up points (three different values of $k$).

\begin{figure}[h]
\begin{center}
\includegraphics[width=0.5\textwidth]{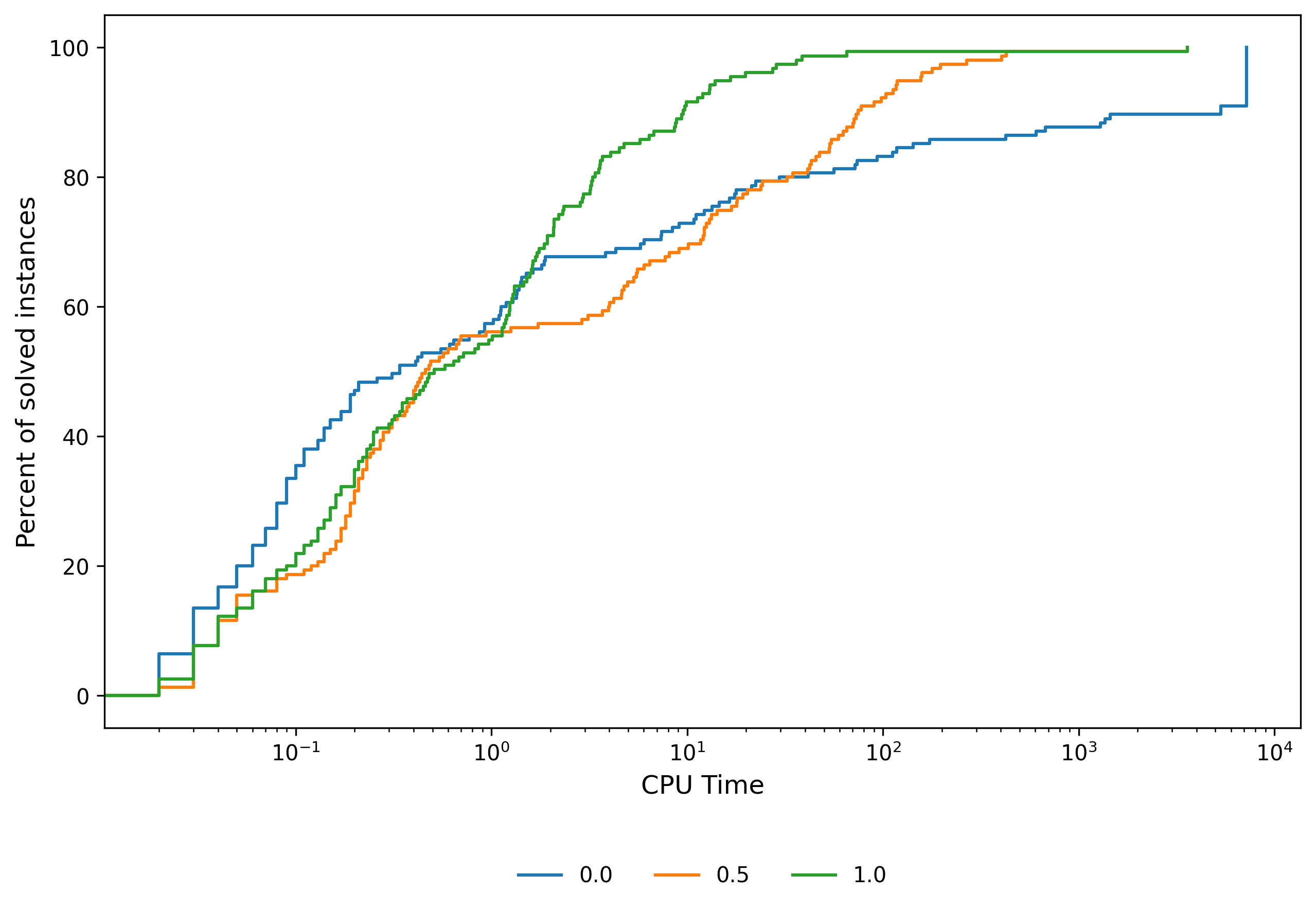}~\includegraphics[width=0.5\textwidth]{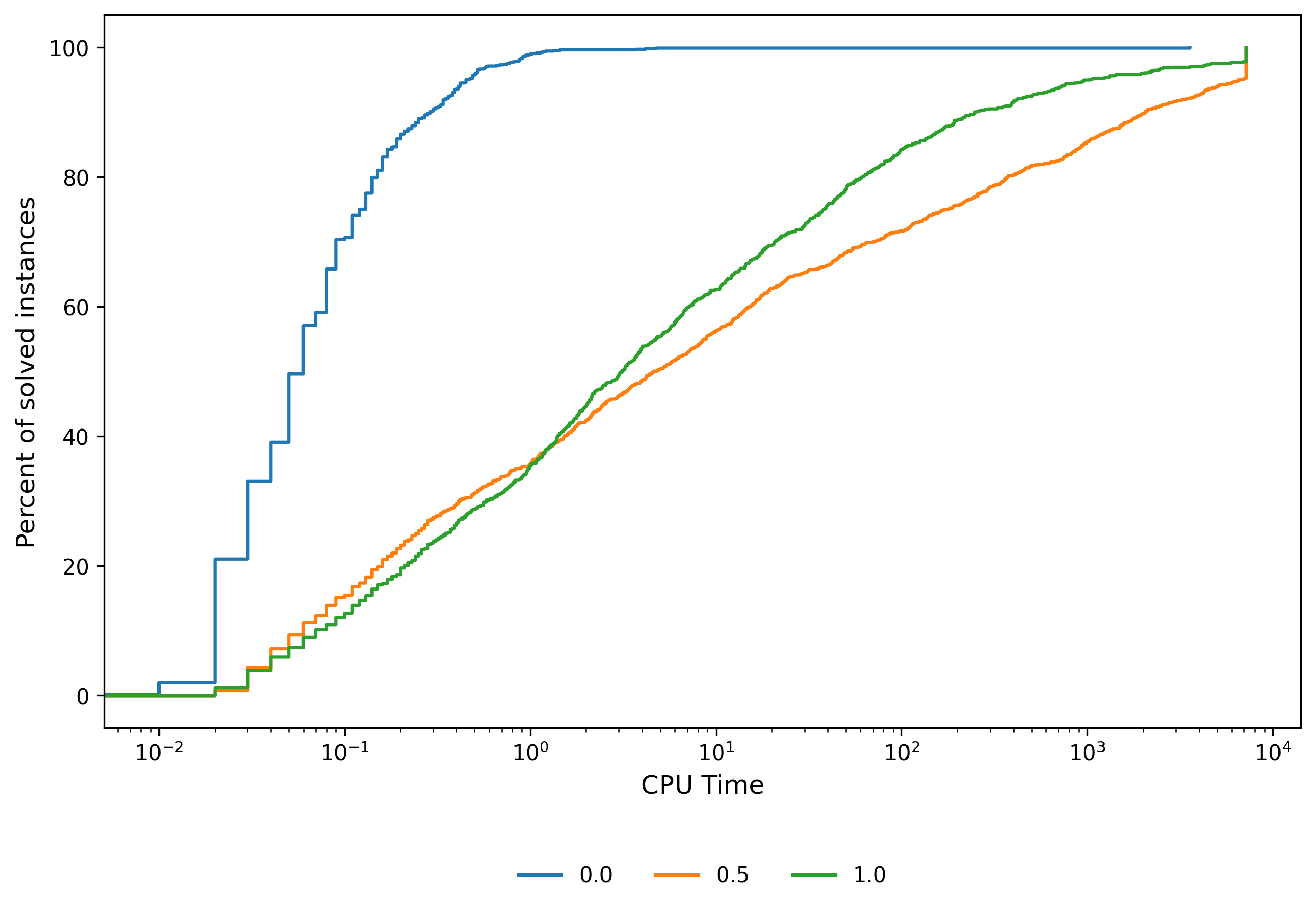}
\caption{Performance profile by value of $\alpha$ (left:   $(1,1)$ -DOBC\_C, right: $(\infty,k)$-DOBC\_P} for all $k$).   CPU time in log scale.\label{fig:4}
\end{center}
\end{figure}

In Figures \ref{fig:5} and  \ref{fig:4}   we analyze the computational implications of the different parameters that we consider in our experiments, namely $\rho, \alpha$. In Figure \ref{fig:5} we draw the performance profile for the three types of $\rho$-values (capacities of the vehicles). Note that the capacity of the vehicle may require more than one visit to pick up all the demand and therefore, the most time-demanding instances are those with smaller capacities, although the differences are not too significant.

On the other hand, the value of $\alpha$ has also an impact in the computational performance of the models. In Figure \ref{fig:4} we plot the performance profile for both $(1,1)$-DOBC\_C (left) and $(\infty,k)$-DOBC\_P (right), considering together all the values of  $k$. As one can observe,  instances with smaller $\alpha$ values seem to be easier to solve, especially for the $(\infty,k)$-DOBC\_P instances, where the differences are evident.

We have also analyzed the computational implications  of the requirement of cycle solutions or not and we have checked that  there are not significant differences between the models that require cycle solutions and those that allow the solution to be a path.

\subsection{Large-size instances}

With this second block of experiments we aim to find out the size of the instances that we are able to solve up to optimality with our models. In Table \ref{tabla:2} and Figure \ref{fig:9} we summarize the average results for the instances with size $n=50$,  $m \in \{5, 10, 20\}$ and $p\in\{2, 5, 10, 20\}$ with $p\leq m$. Although we also run the $n=100$ instances, all the approaches reached the time limit, with large MIPGaps, so the results are not reported. Furthermore, model $(1,1)$-DOBC\_C was not able to solve to optimality of the instances,  so we only report the results for $(\infty,k)$-DOBC\_P with $k \in \{1,2,3\}$. 

In Table \ref{tabla:2} we summarize the results for $(\infty,k)$-DOBC\_P with $k \in \{1,2,3\}$ in terms of the number of  instances that overcame the MIPGap limit of $0.5\%$ after the time limit, out a total of 30 instances of the same size (\textbf{UnS})   and the average MIP Gaps (\textbf{Gap}) obtained in these cases. In this table, we show only the results for $\alpha=1$, since   for $\alpha=0$ all the instances were optimally solved, and for $\alpha=0.5$ none of the instances was optimally solved. In  Figure \ref{fig:9} we compare the computational time required to optimally solve the instances for $\alpha=0$ (focusing on flow costs) and $\alpha=1$ (focusing on link costs). In this figure, where the times are dissagregated by the value of $k$, we can observe that instances are easier to solve for $\alpha=0$  than for $\alpha=1$.

Table \ref{tabla:2} and Figure \ref{fig:9} show that, as expected, both the computational time required to solve the problems and the MIP Gap increase with the value of $k$.

Finally, we have checked that the results for the large instances are not affected by the value of $\rho$ nor by the requirement of being a cycle or not in the models. 

\begin{table}
\centering
{\small\begin{tabular}{| c  c  c | c | c | c | c | c | c |}
\hline
\multicolumn{9}{|c|}{$(\infty,k)$-DOBC\_P}\\
\hline
\multicolumn{3}{|c}{} & \multicolumn{2}{|c}{$k=1$} & \multicolumn{2}{|c}{$k=2$} & \multicolumn{2}{|c|}{$k=3$}\\\hline
\textbf{n} & \textbf{m} & \textbf{p} & \texttt{Gap}(\%) & \textbf{UnS} & \texttt{Gap}(\%) & \textbf{UnS} & \texttt{Gap}(\%) & \textbf{UnS} \\
\hline
\multirow{9}{*}{$50$} & \multirow{2}{*}{$5$} & \textbf{2} & 6.42 & 18 & 20.21 & 30 & 40.19 & 30 \\
 &  & \textbf{5} & 3.26 & 12 & 12.64 & 30 & 33.02 & 30 \\
\cline{2-9}
 & \multirow{3}{*}{$10$} & \textbf{2} & 8.81 & 27 & 20.78 & 30 & 42.71 & 30 \\
 &  & \textbf{5} & 1.48 & 12 & 10.87 & 30 & 32.25 & 30 \\
 &  & \textbf{10} & 1.43 & 9 & 9.19 & 29 & 29.58 & 30 \\
\cline{2-9}
 & \multirow{4}{*}{$20$} & \textbf{2} & 11.44 & 25 & 26.14 & 30 & 49.77 & 30 \\
 &  & \textbf{5} & 2.29 & 6 & 10.56 & 23 & 35.76 & 30 \\
 &  & \textbf{10} & 0.49 & 0 & 1.39 & 13 & 15.94 & 29 \\
 &  & \textbf{20} & 0.49 & 0 & 1.42 & 11 & 15.09 & 29 \\
\hline
\multicolumn{3}{|c|}{\textbf{Total}}  & \textbf{4.01} & \textbf{109} & \textbf{12.58} & \textbf{226} & \textbf{32.70} & \textbf{268} \\
\hline

\end{tabular}}
\caption{Number of unsolved up to optimality instances (out of 30 per row) and  average percentage MIP Gap.\label{tabla:2}}
\end{table}

\begin{figure}[h]
\begin{center}
\includegraphics[width=0.5\textwidth]{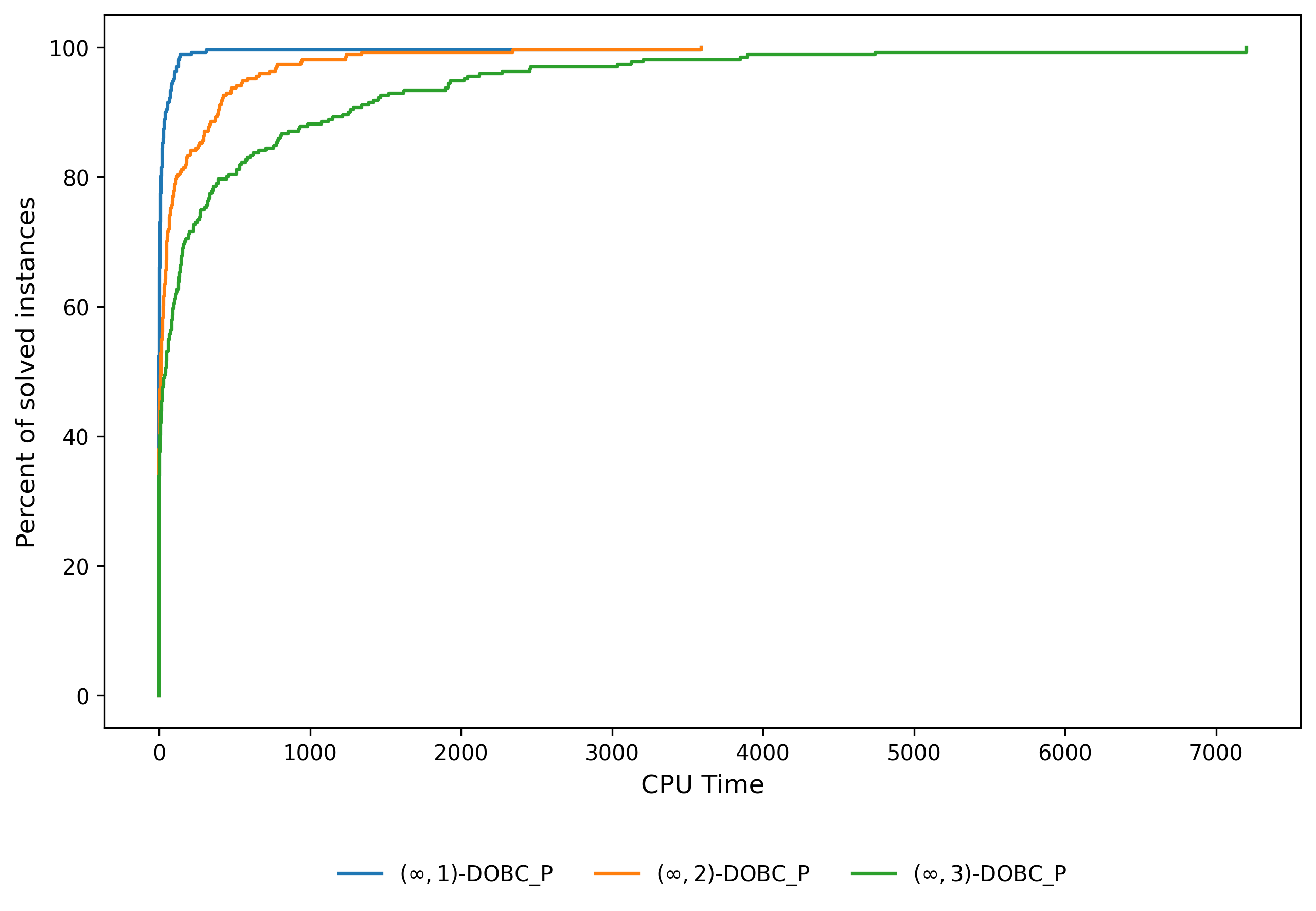}~\includegraphics[width=0.5\textwidth]{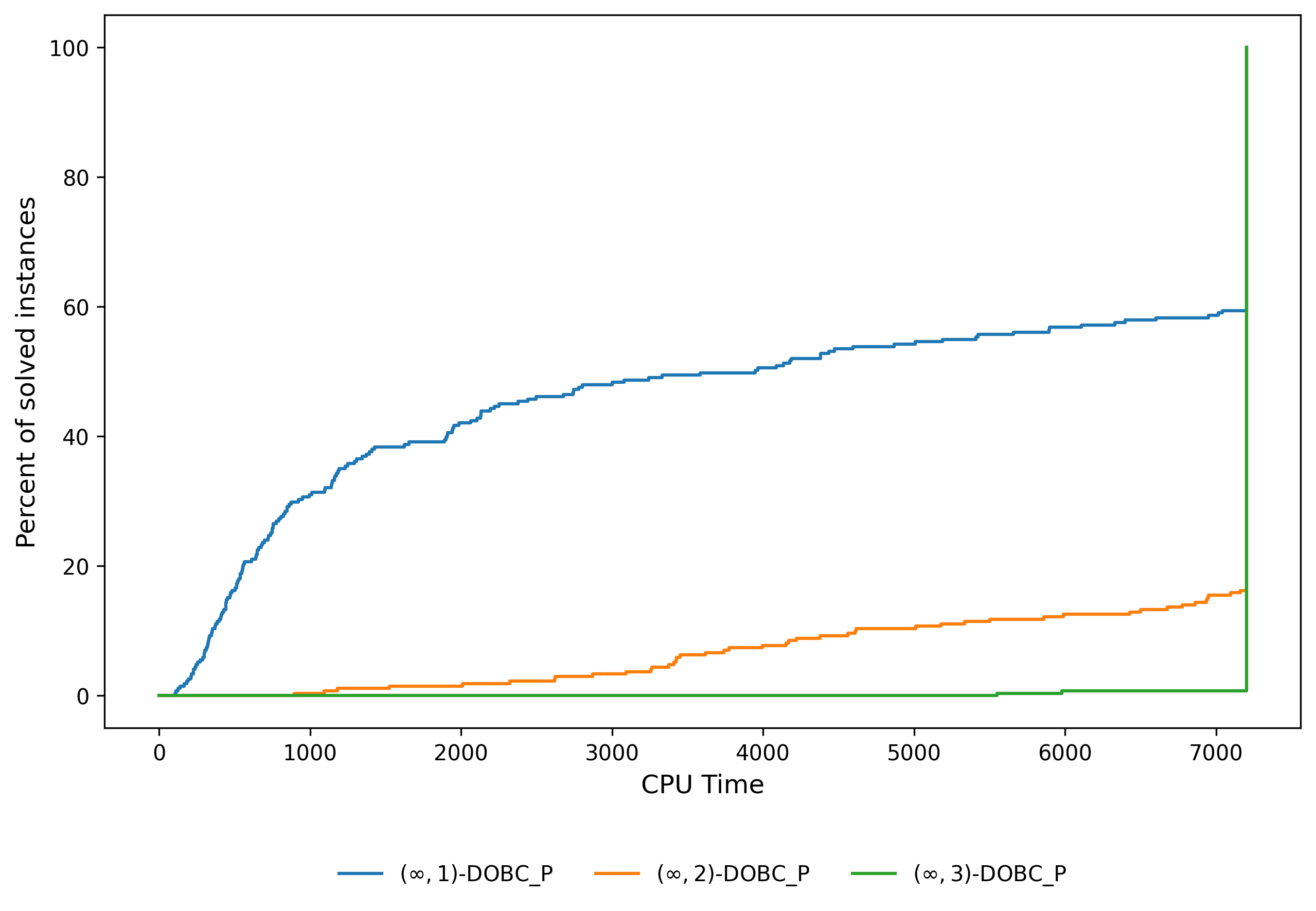}
\caption{Performance profile by the value of $k$ for $\alpha=0$ (left) and $\alpha=1$ (right). \label{fig:9}}
\end{center}
\end{figure}

\subsection{Infeasibility, number of replicas, and objective value}

\begin{table}[h!]
\centering
{\small
\begin{tabular}{| l | p{0.1cm} | p{0.3cm} | p{0.3cm} | p{0.3cm} | p{0.3cm} || p{0.1cm} | p{0.3cm} | p{0.3cm} | p{0.3cm} | p{0.3cm}  || p{0.1cm} | p{0.3cm} | p{0.3cm} | p{0.3cm} | p{0.3cm}  || p{0.1cm} | p{0.3cm} | p{0.3cm} | p{0.3cm} | p{0.3cm}  |}
\hline
\multicolumn{21}{c}{$\rho = \texttt{quantile}$}\\\hline
 & \multicolumn{5}{|c||}{5\%}  & \multicolumn{5}{|c||}{10\%}   &   \multicolumn{5}{|c||}{15\%}  & \multicolumn{5}{|c|}{20\%}  \\
\hline
$n\backslash k$ & $1$ & $2$ & $3$ & $4$ & $5$ & $1$ & $2$ & $3$ & $4$ & $5$  & $1$ & $2$ & $3$ & $4$ & $5$ &  $1$ & $2$ & $3$ & $4$ & $5$ \\
\hline\hline
$5$  & 1 & 1 & 0.6 & 0.2 & 0.2 & 1 & 1 & 0.4 & 0.2 & 0.2 & 1 & 1 & 0.4 & 0.2 & 0.2 & 1 & 0.8 & 0.2 & 0.2 & 0.2 \\\hline
$10$ & 1 & 1 & 0.8 & 0.6 & 0.6 & 1 & 1 & 0.8 & 0.6 & 0.2 & 1 & 1 & 0.8 & 0.2 & 0.2 & 1 & 1 & 0.6 & 0.2 & 0 \\\hline
$20$& 1 & 1 & 1 & 1 & 0.2 & 1 & 1 & 0.6 & 0.2 & 0.2 & 1 & 1 & 0.4 & 0.2 & 0.2 & 1 & 1 & 0.2 & 0.2 & 0.2 \\\hline
$50$ & 1 & 1 & 1 & 1 & 1 & 1 & 1 & 1 & 1 & 0.6 & 1 & 1 & 1 & 0.6 & 0.2 & 1 & 1 & 0.6 & 0.4 & 0 \\\hline
$100$ & 1 & 1 & 1 & 1 & 1 & 1 & 1 & 1 & 1 & 0.4 & 1 & 1 & 1 & 0.6 & 0 & 1 & 1 & 1 & 0 & 0 \\\hline
\multicolumn{21}{c}{$\rho = \texttt{quantile}$}\\\hline
& \multicolumn{5}{|c||}{25\%}  & \multicolumn{5}{|c||}{40\%}   &   \multicolumn{5}{|c||}{55\%}  & \multicolumn{5}{|c|}{75\%}  \\
\hline\hline
$n\backslash k$ & $1$ & $2$ & $3$ & $4$ & $5$ & $1$ & $2$ & $3$ & $4$ & $5$ &  $1$ & $2$ & $3$ & $4$ & $5$ &  $1$ & $2$ & $3$ & $4$ & $5$ \\\hline\hline
$5$ &1 & 0.8 & 0.2 & 0.2 & 0 & 1 & 0.6 & 0.2 & 0 & 0 & 1 & 0.2 & 0 & 0 & 0 & 1 & 0 & 0 & 0 & 0 \\\hline
$10$ &1 & 0.8 & 0.4 & 0.2 & 0 & 1 & 0.6 & 0.2 & 0 & 0 & 1 & 0 & 0 & 0 & 0 & 1 & 0 & 0 & 0 & 0 \\\hline
$20$ &1 & 0.8 & 0.2 & 0 & 0 & 1 & 0.2 & 0 & 0 & 0 & 1 & 0 & 0 & 0 & 0 & 1 & 0 & 0 & 0 & 0 \\\hline
$50$ &1 & 1 & 0.4 & 0 & 0 & 1 & 0.8 & 0 & 0 & 0 & 1 & 0 & 0 & 0 & 0 & 1 & 0 & 0 & 0 & 0 \\\hline
$100$&1 & 1 & 0.4 & 0 & 0 & 1 & 1 & 0 & 0 & 0 & 1 & 0 & 0 & 0 & 0 & 1 & 0 & 0 & 0 & 0 \\\hline
\end{tabular}}
\caption{Proportion of infeasible instances.\label{table:3}}
\end{table}

We finish our computational study by analyzing the convenience of bounding above the number of visits to the pickup points in the models (taking into account that the larger the value of $k$ in models $(\infty,k)$-DOBC\_T the larger are the computation times) based on both the infeasibility of the problem and the optimal objective values of the feasible instances.

First, observe that, given an instance,  one can compute the minimum number of pick-up points' replicas, $k$, required to assure feasibility, in terms of the demands of the pick-up points, $p_v$ for all $v\in P$, and the capacity of the vehicle, $\rho$. Note that the instance is feasible if and only if:
$$
\frac{\max_{v \in P} p_v}{\rho} \leq k.
$$
Then, in Table \ref{table:3} we analyze the proportion of instances that are infeasible for the different values of $k \in \{1,2,3,4,5\}$, when defining $\rho$ as a different quantile of the demands $\{p_v\}_{v\in P}$. Observe that with this in mind, one could select the adequate number of replicas, $k$, required to assure feasibility of the problem depending on the capacity of the vehicle $\rho$. Although one could choose a larger value of $k$, that would be also feasible, the values of the objective values do no improve significantly (see Figure \ref{fig:3}), while the computational times increase considerably.

\begin{figure}[h]
\begin{center}
\includegraphics[width=0.5\textwidth]{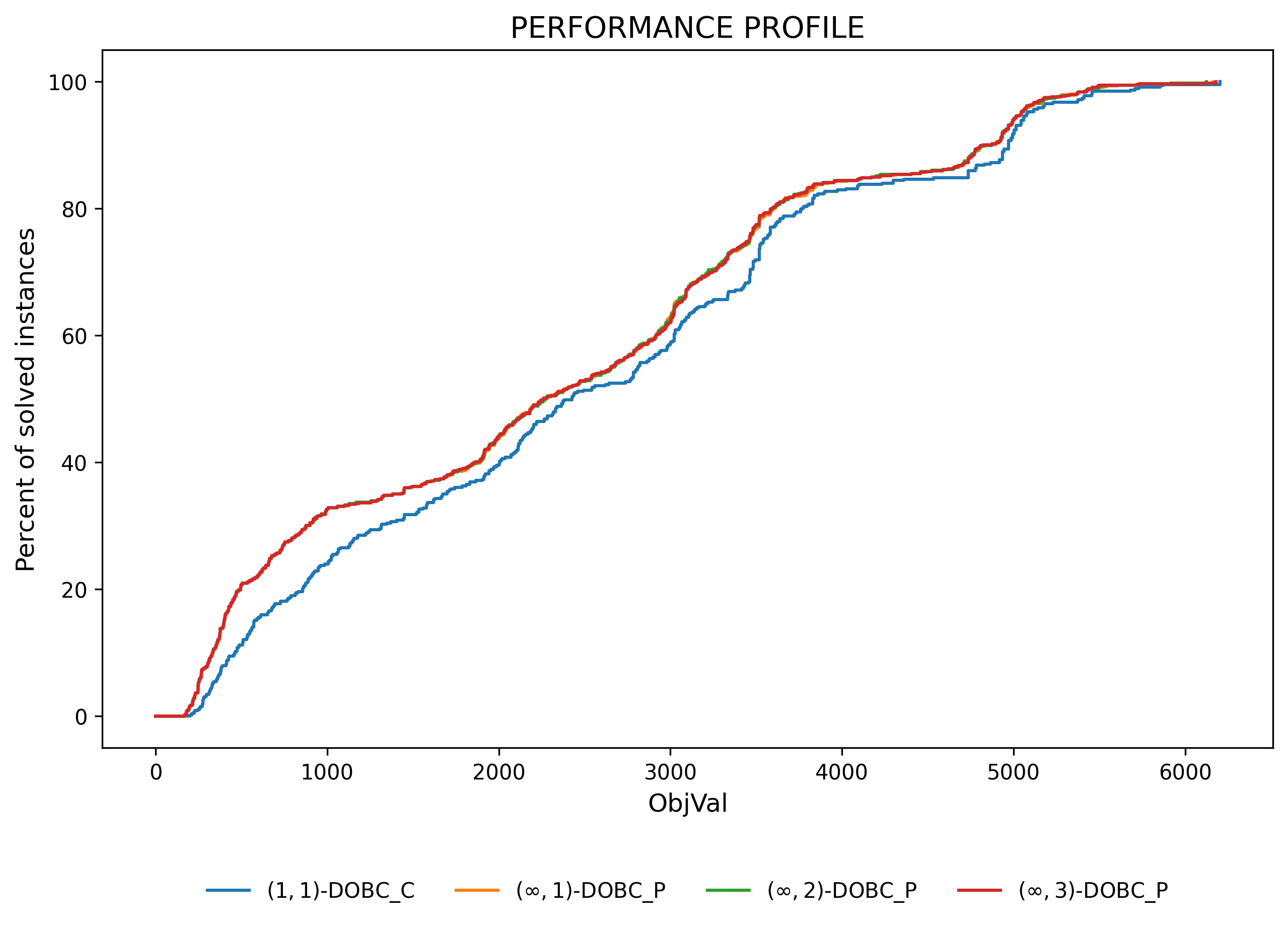}
\caption{Performance profile for the objective value by the different models. \label{fig:3}}
\end{center}
\end{figure}

Observe, the only way that an instance is feasible for $k=1$ is that $\rho \geq \max_{v} p_v$, that is, that the capacity of the vehicle is  greater than the maximum demand. Otherwise, the instance will be infeasible. In other cases, at least $k=2$ is required, that is, more than one visit to the pickup points must be allowed. In case $\rho$ is small enough, for instance, the $5\%$ quantile of the demands, even $5$ visits may result in infeasibility (see rows for $n=50$ or $n=100$ in the table, where all the instances are infeasible for this value of $\rho$). In the other extreme, for $\rho$ being the $55\%$ quantile most of our instances (except $20\%$ of the $n=5$ instances) are feasible for $k=2$. Both, this table and the formula above that relates the capacity of the vehicle, $\rho$, with the maximum demand at the pick-up points, may serve as a tool to decide the adequate number of  visits to the pick-up points,  $k$, that must be allowed in our models.


\begin{thebibliography}{49}
\providecommand{\natexlab}[1]{#1}
\providecommand{\url}[1]{\texttt{#1}}
\expandafter\ifx\csname urlstyle\endcsname\relax
  \providecommand{\doi}[1]{doi: #1}\else
  \providecommand{\doi}{doi: \begingroup \urlstyle{rm}\Url}\fi

\bibitem[Abdel-Aal(2024)]{aal2024matheuristic}
Mohammad~A.M. Abdel-Aal.
\newblock Matheuristic approach and a mixed-integer linear programming model
  for biomass supply chain optimization with demand selection.
\newblock \emph{Int. J. Ind. Eng. Comput.}, 15\penalty0 (1):\penalty0 235--254,
  2024.

\bibitem[Albareda-Sambola and
  Rodr{\'i}guez-Pereira(2019)]{albareda2019location}
Maria Albareda-Sambola and Jessica Rodr{\'i}guez-Pereira.
\newblock Location-routing and location-arc routing.
\newblock In Gilbert Laporte, Stefan Nickel, and Francisco Saldanha~da Gama,
  editors, \emph{Location Science}, pages 431--451. Springer, 2019.
\newblock ISBN 978-3-030-32177-2.

\bibitem[Archetti and Speranza(2008)]{split}
Claudia Archetti and Maria~Grazia Speranza.
\newblock The split delivery vehicle routing problem: A survey.
\newblock In Bruce Golden, S.~Rahavan, and Edward Wasil, editors, \emph{The
  vehicle routing problem: Latest advances and new challenges}, pages 103--122.
  Springer, 2008.

\bibitem[Baïou et~al.(2000)Baïou, Barahona, and Mahjoub]{partition2000}
Mourad Baïou, Francisco Barahona, and Ali~Ridha Mahjoub.
\newblock Separation of partition inequalities.
\newblock \emph{Mathematics of Operations Research}, 25\penalty0 (2):\penalty0
  243--254, 2000.
\newblock ISSN 0364765X, 15265471.

\bibitem[Blanco et~al.(2020)Blanco, Puerto, and Rodriguez-Chía]{blanco2020lp}
Victor Blanco, Justo Puerto, and Antonio~M Rodriguez-Chía.
\newblock On lp-support vector machines and multidimensional kernels.
\newblock \emph{Journal of Machine Learning Research}, 21\penalty0
  (14):\penalty0 1--29, 2020.

\bibitem[Blanco et~al.(2021)Blanco, Jap{\'o}n, Ponce, and
  Puerto]{blanco2021multisource}
V{\'\i}ctor Blanco, Alberto Jap{\'o}n, Diego Ponce, and Justo Puerto.
\newblock On the multisource hyperplanes location problem to fitting set of
  points.
\newblock \emph{Computers \& Operations Research}, 128:\penalty0 105124, 2021.

\bibitem[Blanco et~al.(2024)Blanco, Hinojosa, and Zavala]{blanco2024waste}
V{\'\i}ctor Blanco, Yolanda Hinojosa, and Victor~M Zavala.
\newblock The waste-to-biomethane logistic problem: A mathematical optimization
  approach.
\newblock \emph{ACS Sustainable Chemistry \& Engineering}, 2024.

\bibitem[Blanco et~al.(2018)Blanco, Puerto, and Salmerón]{hiperplanos}
Víctor Blanco, Justo Puerto, and Román Salmerón.
\newblock Locating hyperplanes to fitting set of points: A general framework.
\newblock \emph{Computers \& Operations Research}, 95:\penalty0 172--193, 2018.
\newblock ISSN 0305-0548.

\bibitem[Blanco et~al.(2023)Blanco, Fernández, and Hinojosa]{hubsVEY2023}
Víctor Blanco, Elena Fernández, and Yolanda Hinojosa.
\newblock Hub location with protecction under interhub link failures.
\newblock \emph{{INFORMS} Journal on Computing}, 35\penalty0 (5):\penalty0
  966--985, 2023.

\bibitem[Boyd and Hao(1993)]{boyd1993integer}
Sylvia~C Boyd and Tianbao Hao.
\newblock An integer polytope related to the design of survivable communication
  networks.
\newblock \emph{SIAM Journal on Discrete Mathematics}, 6\penalty0 (4):\penalty0
  612--630, 1993.

\bibitem[Cambero and Sowlati(2014)]{CAMBERO201462}
Claudia Cambero and Taraneh Sowlati.
\newblock Assessment and optimization of forest biomass supply chains from
  economic, social and environmental perspectives – a review of literature.
\newblock \emph{Renew. Sustain. Energy Rev.}, 36:\penalty0 62--73, 2014.

\bibitem[Corber{\'a}n et~al.(2021)Corber{\'a}n, Plana, Reula, and
  Sanchis]{corberan2021distance}
{\'A}ngel Corber{\'a}n, Isaac Plana, Miguel Reula, and Jos{\'e}~M Sanchis.
\newblock On the distance-constrained close enough arc routing problem.
\newblock \emph{European Journal of Operational Research}, 291\penalty0
  (1):\penalty0 32--51, 2021.

\bibitem[Dantzig and Ramser(1959)]{dantzig1959truck}
George~B Dantzig and John~H Ramser.
\newblock The truck dispatching problem.
\newblock \emph{Management science}, 6\penalty0 (1):\penalty0 80--91, 1959.

\bibitem[{De Meyer} et~al.(2015){De Meyer}, Cattrysse, and {Van
  Orshoven}]{DEMEYER2015247}
Annelies {De Meyer}, Dirk Cattrysse, and Jos {Van Orshoven}.
\newblock A generic mathematical model to optimise strategic and tactical
  decisions in biomass-based supply chains ({OPTIMASS}).
\newblock \emph{European Journal of Operational Reserach.}, 245\penalty0
  (1):\penalty0 247--264, 2015.

\bibitem[Domínguez-Martín et~al.(2024)Domínguez-Martín, Hernández-Pérez,
  Riera-Ledesma, and Rodríguez-Martín]{DOMINGUEZMARTIN2024106426}
Bencomo Domínguez-Martín, Hipólito Hernández-Pérez, Jorge Riera-Ledesma,
  and Inmaculada Rodríguez-Martín.
\newblock A branch-and-cut algorithm for the one-commodity pickup and delivery
  location routing problem.
\newblock \emph{Computers \& Operations Research}, 161:\penalty0 106426, 2024.
\newblock ISSN 0305-0548.

\bibitem[Dreyfus and Wagner(1971)]{dreyfus1971steiner}
Stuart~E Dreyfus and Robert~A Wagner.
\newblock The steiner problem in graphs.
\newblock \emph{Networks}, 1\penalty0 (3):\penalty0 195--207, 1971.

\bibitem[Dumitrescu et~al.(2010)Dumitrescu, Ropke, Cordeau, and
  Laporte]{dumitrescu2010traveling}
Irina Dumitrescu, Stefan Ropke, Jean-Fran{\c{c}}ois Cordeau, and Gilbert
  Laporte.
\newblock The traveling salesman problem with pickup and delivery: polyhedral
  results and a branch-and-cut algorithm.
\newblock \emph{Mathematical Programming}, 121:\penalty0 269--305, 2010.

\bibitem[Fetting(2020)]{EGD}
Constanze Fetting.
\newblock The european green deal, 2020.
\newblock URL
  \url{https://www.esdn.eu/fileadmin/ESDN_Reports/ESDN_Report_2_2020.pdf}.
\newblock (Accessed on Jan. 11, 2024).

\bibitem[Fischetti et~al.(2017)Fischetti, Leitner, Ljubi{\'c}, Luipersbeck,
  Monaci, Resch, Salvagnin, and Sinnl]{fischetti2017thinning}
Matteo Fischetti, Markus Leitner, Ivana Ljubi{\'c}, Martin Luipersbeck, Michele
  Monaci, Max Resch, Domenico Salvagnin, and Markus Sinnl.
\newblock Thinning out steiner trees: a node-based model for uniform edge
  costs.
\newblock \emph{Mathematical Programming Computation}, 9\penalty0 (2):\penalty0
  203--229, 2017.

\bibitem[Frank(1995)]{frank1995connectivity}
Andr{\'a}s Frank.
\newblock Connectivity and network flows.
\newblock \emph{Handbook of combinatorics}, 1:\penalty0 111--177, 1995.

\bibitem[Ghaderi et~al.(2016)Ghaderi, Pishvaee, and Moini]{GHADERI2016972}
Hamid Ghaderi, Mir~Saman Pishvaee, and Alireza Moini.
\newblock Biomass supply chain network design: An optimization-oriented review
  and analysis.
\newblock \emph{Industrial Crops and Products}, 94:\penalty0 972--1000, 2016.

\bibitem[Gribkovskaia and Laporte(2008)]{gribkovskaia2008one}
Irina Gribkovskaia and Gilbert Laporte.
\newblock One-to-many-to-one single vehicle pickup and delivery problems.
\newblock In B.L. Golden, S.~Raghavan, and E.A. Wasil, editors, \emph{Vehicle
  Routing: Latest Advances and Challenges}, pages 359--377. Springer, 2008.

\bibitem[Gribkovskaia et~al.(2008)Gribkovskaia, Laporte, and
  Shyshou]{gribkovskaia2008single}
Irina Gribkovskaia, Gilbert Laporte, and Aliaksandr Shyshou.
\newblock The single vehicle routing problem with deliveries and selective
  pickups.
\newblock \emph{Computers \& Operations Research}, 35\penalty0 (9):\penalty0
  2908--2924, 2008.

\bibitem[Gr{\"o}tschel et~al.(1992)Gr{\"o}tschel, Monma, and
  Stoer]{grotschel1992computational}
Martin Gr{\"o}tschel, Clyde~L Monma, and Mechthild Stoer.
\newblock Computational results with a cutting plane algorithm for designing
  communication networks with low-connectivity constraints.
\newblock \emph{Operations Research}, 40\penalty0 (2):\penalty0 309--330, 1992.

\bibitem[Gusfield(1993)]{gusfield1990}
D.~Gusfield.
\newblock Very simple methods for all pairs network flow analysis.
\newblock \emph{SIAM Journal on Applied Mathematics}, 19\penalty0 (1):\penalty0
  143--555, 1993.

\bibitem[Hakimi(1964)]{hakimi1964optimum}
S~Louis Hakimi.
\newblock Optimum locations of switching centers and the absolute centers and
  medians of a graph.
\newblock \emph{Operations Research}, 12\penalty0 (3):\penalty0 450--459, 1964.

\bibitem[Hakimi(1965)]{hakimi1965optimum}
S~Louis Hakimi.
\newblock Optimum distribution of switching centers in a communication network
  and some related graph theoretic problems.
\newblock \emph{Operations Research}, 13\penalty0 (3):\penalty0 462--475, 1965.

\bibitem[Hern{\'a}ndez-P{\'e}rez and
  Salazar-Gonz{\'a}lez(2004)]{hernandez2004branch}
Hip{\'o}lito Hern{\'a}ndez-P{\'e}rez and Juan-Jos{\'e} Salazar-Gonz{\'a}lez.
\newblock A branch-and-cut algorithm for a traveling salesman problem with
  pickup and delivery.
\newblock \emph{Discrete Applied Mathematics}, 145\penalty0 (1):\penalty0
  126--139, 2004.

\bibitem[{International Energy Agency}(2021)]{IEA}
{International Energy Agency}.
\newblock Net zero by 2050: A roadmap for the global energy sector., 2021.
\newblock (Accessed on Jan. 12, 2024).

\bibitem[Jensen et~al.(2017)Jensen, Münster, and
  Pisinger]{jensen_optimizing_2017}
Ida~Græsted Jensen, Marie Münster, and David Pisinger.
\newblock Optimizing the supply chain of biomass and biogas for a single plant
  considering mass and energy losses.
\newblock \emph{European Journal of Operational Research}, 262:\penalty0
  744--758, 2017.

\bibitem[Laporte(1988)]{laporte1988location}
G~Laporte.
\newblock Location-routing problems.
\newblock In B.L. Golden and A.A. Assad, editors, \emph{Vehicle Routing:
  Methods and Studies}, pages 163--168. North-Holland, 1988.

\bibitem[Laporte et~al.(2019)Laporte, Nickel, and Saldanha-da
  Gama]{LocationScience19}
Gilbert Laporte, Stefan Nickel, and Francisco Saldanha-da Gama, editors.
\newblock \emph{Location Science}.
\newblock Springer, 2019.

\bibitem[Legg(2021)]{IPCC}
S.~Legg.
\newblock Ipcc, 2021: Climate change 2021-the physical science basis.
\newblock \emph{Interaction}, 49(4):\penalty0 44--45, 2021.

\bibitem[Ljubi{\'c}(2021)]{ljubic2021solving}
Ivana Ljubi{\'c}.
\newblock Solving steiner trees: Recent advances, challenges, and perspectives.
\newblock \emph{Networks}, 77\penalty0 (2):\penalty0 177--204, 2021.

\bibitem[Maculan(1987)]{maculan1987steiner}
Nelson Maculan.
\newblock The steiner problem in graphs.
\newblock \emph{North-Holland Mathematics Studies}, 132:\penalty0 185--211,
  1987.

\bibitem[Maculan et~al.(2000)Maculan, Michelon, and
  Xavier]{maculan2000euclidean}
Nelson Maculan, Philippe Michelon, and Adilson~E Xavier.
\newblock The euclidean steiner tree problem in r n: A mathematical programming
  formulation.
\newblock \emph{Annals of Operations Research}, 96:\penalty0 209--220, 2000.

\bibitem[Mar{\'\i}n et~al.(2022)Mar{\'\i}n, Mart{\'\i}nez-Merino, Puerto, and
  Rodr{\'\i}guez-Ch{\'\i}a]{marin2022soft}
Alfredo Mar{\'\i}n, Luisa~I Mart{\'\i}nez-Merino, Justo Puerto, and Antonio~M
  Rodr{\'\i}guez-Ch{\'\i}a.
\newblock The soft-margin support vector machine with ordered weighted average.
\newblock \emph{Knowledge-based systems}, 237:\penalty0 107705, 2022.

\bibitem[Meira et~al.(2017)Meira, Miyazawa, and Pedrosa]{locationForClustering}
Luis~A.A. Meira, Flávio~K. Miyazawa, and Lehilton~L.C. Pedrosa.
\newblock Clustering through continuous facility location problems.
\newblock \emph{Theoretical Computer Science}, 657:\penalty0 137--145, 2017.
\newblock ISSN 0304-3975.

\bibitem[Mor and Speranza(2022)]{survey22}
Andrea Mor and Maria~Grazia Speranza.
\newblock Vehicle routing problems over time: a survey.
\newblock \emph{Annals of Operations Research}, 314\penalty0 (1):\penalty0
  255--275, 2022.

\bibitem[Mosheiov(1994)]{mosheiov1994travelling}
Gur Mosheiov.
\newblock The travelling salesman problem with pick-up and delivery.
\newblock \emph{European Journal of Operational Research}, 79\penalty0
  (2):\penalty0 299--310, 1994.

\bibitem[Padberg and Gr\"{o}tschel(1985)]{PG-1985}
M.W. Padberg and M.~Gr\"{o}tschel.
\newblock Polyhedral computations.
\newblock In \emph{The Traveling Saliesman Prolbem: A Guided Tour of
  Combinatorial Optimization}, pages 307--360. Wiley, 1985.

\bibitem[Rodr{\'\i}guez-Pereira et~al.(2019)Rodr{\'\i}guez-Pereira,
  Fern{\'a}ndez, Laporte, Benavent, and
  Mart{\'\i}nez-Sykora]{rodriguez2019steiner}
Jessica Rodr{\'\i}guez-Pereira, Elena Fern{\'a}ndez, Gilbert Laporte, Enrique
  Benavent, and Antonio Mart{\'\i}nez-Sykora.
\newblock The steiner traveling salesman problem and its extensions.
\newblock \emph{European Journal of Operational Research}, 278\penalty0
  (2):\penalty0 615--628, 2019.

\bibitem[Sarker et~al.(2018)Sarker, Wu, and Paudel]{sarker_optimal_2018}
Bhaba~R. Sarker, Bingqing Wu, and Krishna~P Paudel.
\newblock Optimal number and location of storage hubs and biogas production
  reactors in farmlands with allocation of multiple feedstocks.
\newblock \emph{Applied Mathematical Modelling}, 55:\penalty0 447--465, March
  2018.

\bibitem[Sarker et~al.(2019)Sarker, Wu, and Paudel]{sarker_modeling_2019}
Bhaba~R. Sarker, Bingqing Wu, and Krishna~P. Paudel.
\newblock Modeling and optimization of a supply chain of renewable biomass and
  biogas: {Processing} plant location.
\newblock \emph{Applied Energy}, 239:\penalty0 343--355, April 2019.

\bibitem[Savelsbergh and Sol(1995)]{savelsbergh1995general}
Martin~WP Savelsbergh and Marc Sol.
\newblock The general pickup and delivery problem.
\newblock \emph{Transportation Science}, 29\penalty0 (1):\penalty0 17--29,
  1995.

\bibitem[Tajik et~al.(2014)Tajik, Tavakkoli-Moghaddam, Vahdani, and
  Mousavi]{tajik2014robust}
Nazanin Tajik, Reza Tavakkoli-Moghaddam, Behnam Vahdani, and S~Meysam Mousavi.
\newblock A robust optimization approach for pollution routing problem with
  pickup and delivery under uncertainty.
\newblock \emph{Journal of Manufacturing Systems}, 33\penalty0 (2):\penalty0
  277--286, 2014.

\bibitem[Tominac et~al.(2022)Tominac, Zhang, and Zavala]{TOMINAC2022107666}
Philip~A Tominac, Weiqi Zhang, and Victor~M Zavala.
\newblock Spatio-temporal economic properties of multi-product supply chains.
\newblock \emph{Computers \& Chemical Engineering}, 159:\penalty0 107666, 2022.

\bibitem[Toth and Vigo(2002)]{TothVigoVRP}
Paolo Toth and Daniele Vigo.
\newblock \emph{The Vehicle Routing Problem}.
\newblock SIAM, 2002.

\bibitem[{United Nations}(2015)]{ONU}
{United Nations}.
\newblock Transforming our world: The 2030 agenda for sustainable development.,
  2015.
\newblock (Accessed on Jan. 11, 2024).

\end{thebibliography}

\section{Conclusions}
\label{sec:conclusions}

In this paper, we introduced the location-routing problem with drop-offs and budget constraints (DOBC), which integrates the selection of facility locations from an affordable set with the design of a vehicle route to collect demand from specified points and drop it off at the selected facilities. We proposed a novel mathematical optimization framework based on an alternative encoding of the underlying graph on which the route is defined. This flexible modeling approach captures a range of relevant variants and enables further analytical developments.

To solve the problem efficiently, we developed a two-phase branch-and-cut algorithm capable of effectively separating the exponential family of constraints required to ensure route connectivity. The proposed methodology was validated through an extensive set of computational experiments, where we examined the behavior of the model under various parameter settings. In particular, we analyzed the role of imposing upper bounds on the number of visits to pickup points, and showed how such bounds can be instrumental in guaranteeing the feasibility of the problem.

Several promising extensions of the proposed problem remain open for future research. First, as commonly encountered in similar real-world applications, demand is often subject to uncertainty, and should therefore be modeled accordingly. There are multiple ways to incorporate this uncertainty into the DOBC framework—such as robust or stochastic optimization approaches, and each alternative deserves further investigation.

Second, a natural and practically relevant extension involves the multi-vehicle case, which introduces an additional layer of complexity. In this setting, not only must facility locations and vehicle routes be determined, but it is also necessary to assign pickup points to specific vehicles, effectively clustering visits to ensure efficient service. These extensions represent fruitful directions for advancing both the modeling and algorithmic contributions of this work.

\section*{Acknowledgements}

The authors acknowledge financial support by  grants PID2020-114594GB-C21 funded by MICIU/AEI/ 10.13039/501100011033; 
PID2022-139219OB-I00 funded by MCIN/AEI/10.13039/501100011033/FEDER-EU;
RED2022-134149-T funded by MICIU/AEI/10.13039/501100011033 (Thematic Network on Location Science and Related Problems); FEDER+Junta de Andalucía projects C‐EXP‐139‐UGR23, and AT 21\_00032; SOL2024-31596
and SOL2024-31708 funded by US;  the IMAG-Maria de Maeztu grant CEX2020-001105-M /AEI /10.13039/501100011033; and the IMUS--Maria de Maeztu grant CEX2024-001517-M.

\end{document}